\newtheorem{theorem}{Theorem}[section]
\newtheorem*{mytheorem:spectrnodesP}{Theorem~\ref{t:fundspec}A}
\newtheorem{lemma}[theorem]{Lemma}
\newtheorem{proposition}[theorem]{Proposition}
\newtheorem{corollary}[theorem]{Corollary}
\newtheorem*{mycorollary:commoneigP}{Corollary~\ref{c:commoneig}A}
\newtheorem*{mycorollary:initnodesP}{Corollary~\ref{c:initnodes}A}
\newtheorem*{mylemma:uniqueevalueP}{Lemma~\ref{l:uniqueevalue}A}
\newtheorem*{mytheorem:distrootsP}{Theorem~\ref{t:distroots}A}
\newtheorem{remark}[theorem]{Remark}
\newtheorem{example}[theorem]{Example}
\def\N{\mathbb{N}}
\def\R{\mathbb{R}}
\def\Img{\operatorname{Im}}
\def\Rp{\R_+}
\def\Rpn{\Rp^n}
\def\Rpnn{\Rp^{n\times n}}
\def\Rnn{\R^{n\times n}}
\def\crit{{\mathcal C}}
\def\mcm{\lambda}
\def\diag{\operatorname{diag}}
\def\assgraph{{\mathcal G}}
\def\redgraph{{\mathcal R}}
\def\intl{\operatorname{Intl}}
\def\gl{\lambda}
\def\ga{\alpha}
\title{On commuting matrices in max algebra and in classical nonnegative algebra}
\author{Ricardo D. Katz}
\address{CONICET. Postal address: Instituto de Matem\'atica ``Beppo Levi'', Universidad Nacional de Rosario,
Avenida Pellegrini 250, 2000 Rosario, Argentina.}
\email{rkatz@fceia.unr.edu.ar}
\author{Hans Schneider}
\address{Department of Mathematics, University of Wisconsin-Madison, Madison,
Wisconsin 53706, USA} \email{hans@math.wisc.edu}
\author{Serge\u{\i} Sergeev}
\address{University of Birmingham,
School of Mathematics, Watson Building, Edgbaston B15 2TT, UK}
\email{sergeevs@maths.bham.ac.uk}
\thanks{The second and third authors were supported by the EPSRC grant RRAH12809.}
\subjclass[2010]{primary: 15A80, 15B48, secondary: 15A27, 15A18.
%
%
%
}
\keywords{Tropical algebra, max-plus algebra, max algebra, nonnegative matrices, commuting matrices, common eigenvectors, Perron-Frobenius, Frobenius normal form}
\begin{document}
\begin{abstract}
This paper studies commuting matrices in max algebra and nonnegative
linear algebra. Our starting point is the existence of a common
eigenvector, which directly leads to max analogues of some classical
results for complex matrices. We also investigate Frobenius normal
forms of commuting matrices, particularly when the Perron roots of
the components are distinct. For the case of max algebra, we show
how the intersection of eigencones of commuting matrices can be
described, and we consider connections with Boolean algebra which
enables us to prove that two commuting irreducible matrices in max
algebra have a common eigennode.
\end{abstract}

\maketitle

\section{Introduction}\label{s:introduction}

The study of commuting complex matrices has a long history. As
observed in~\cite{Dra-51}, Cayley considers what appears to be a
generic case of commuting matrices in his famous
memoir~\cite{Cay-58}. Frobenius~\cite{Fro-78,Fro-96} showed that if
$A_i$, $i = 1,\ldots ,r$, are pairwise commuting matrices, then the
eigenvalues $\ga_i^j$, $j = 1,\ldots ,n$, of the matrices $A_i$ may
be ordered so that the eigenvalues of any polynomial
$p(A_1,\ldots,A_r)$ are $p(\ga^j_1, \ldots \ga^j_r)$, $j = 1,\ldots, n$.
Another proof may be found in Schur~\cite{Schu-02}.
Surprisingly, none of these proofs mention eigenvectors.
Frobenius~\cite{Fro-78} also showed that if for given matrices $A,B$ the
equation $AX = XB$ has a nonzero solution, then $A$ and $B$ have a
common eigenvalue. Another well-known result is that pairwise
commuting matrices  have a common eigenvector. We have found no
reference for the first explicit appearance of this property, though
it easily follows from e.g. the canonical form derived by
Weyr~\cite{Wey-90} and his discussion of commuting matrices.
Many generalizations and applications of this result exist,
see~\cite{DDG-51},~\cite{RR-00} or \cite{MS-10}. Several books on
matrix theory, such as~\cite{Gan-59}, contain proofs of the results
stated above.

It is the purpose of this paper to prove analogs of these results
for matrices over two semirings:

1. the semiring of  nonnegative reals under the usual addition, here
called (classical) nonnegative algebra, and

2. the semiring of nonnegative reals with the operation of maximum
playing the role of addition, here called max algebra.

Spectral theory of matrices in nonnegative algebra is usually called
Perron-Frobenius theory after the founders of this topic,
see~\cite{Per-07,Per-07a,Fro-08,Fro-09,Fro-12}.
The basic results are again found in many books on matrix theory,
such as~\cite{Gan-59,HJ-85,Rot-07}.
Commuting nonnegative matrices can be found in~\cite{BP-94},
see Section~\ref{s:classical}, and in~\cite{Rad-99}.
For further information relevant to the present article see~\cite{Sch-86}.

Spectral theory for matrices in max algebra was developed by
Cuninghame-Green~\cite{CG:79} and Gaubert~\cite{Gau-92},
see~\cite{BCG-09,But-10} for recent expositions.

See \cite{DO-10a,DO-10b} for studies of commuting matrices
in more general semirings.

We devote the main sections to properties of
commuting matrices in max algebra.
At the end of this introduction
we give a formal definition of max algebra and make some remarks on
the relation between the two theories. We then review basic max algebra
spectral theory in Section~\ref{s:SpectralProblem} (for those who
are unfamiliar with this topic). We provide a proof that pairwise
commuting matrices have a common eigenvector in
Section~\ref{s:commoneig}. We also derive some immediate
consequences of this theorem, concerning inequalities for Perron
roots and matrix polynomials, and describe the intersection of
principal eigencones by means of the product of spectral projectors.
In Section~\ref{s:fnf}, we investigate Frobenius normal forms of
commuting matrices, showing that in the important special case where
the Perron roots of the components are distinct, the transitive
closures of the associated reduced digraphs coincide.
In Section~\ref{s:bmaxalg} we consider the eigenvector scaling,
which leads us to study commuting matrices in Boolean algebra.
As a result of this study we show that the critical digraphs
of two commuting irreducible matrices in
max algebra share a common node. In Section~\ref{s:classical} it is
indicated that most results in Sections~\ref{s:commoneig}
and~\ref{s:fnf} also hold in nonnegative matrix algebra.
Section~\ref{s:examples} is devoted to numerical examples.

By {\em max algebra} we understand the analogue of linear algebra
developed over the max-times semiring $(\Rp,\oplus,\times)$, which is
the set of nonnegative numbers $\Rp$ equipped with the operations of
``addition'' $a\oplus b:=\max(a,b)$ and ordinary multiplication
$a\times b$. The zero and unity of this semiring coincide with the usual
$0$ and $1$. The operations of the semiring are extended to
nonnegative matrices and vectors in the same way as in conventional
linear algebra. That is, if $A=(a_{ij})$, $B=(b_{ij})$ and
$C=(c_{ij})$ are matrices of compatible sizes with entries in
$\Rp$, we write $C=A\oplus B$ if $c_{ij}=a_{ij}\oplus b_{ij}$ for
all $i,j$ and $C=A\otimes B$ if $c_{ij}=\oplus_{k} a_{ik}
b_{kj}=\max_{k}(a_{ik} b_{kj})$ for all $i,j$. If $A$ is a square
matrix over $\Rp$, then the iterated product
$A\otimes A\otimes \cdots \otimes A$
in which the symbol $A$ appears $k$ times will be
denoted by $A^{k}$.

It is significant that max algebra can be obtained from
nonnegative linear algebra by means of a limit passage called Maslov
dequantization~\cite{Lit-07}:
\begin{equation}
\label{dequant} a\oplus b=\lim_{p\to\infty} a\oplus_p b \enspace ,
\end{equation}
where $a\oplus_p b:=(a^{1/p}+b^{1/p})^p$. Note that
$(\Rp,\oplus_p,\times)$ forms a semiring which is isomorphic to the semiring
$(\Rp,+,\times)$ of nonnegative numbers equipped with the usual addition
and multiplication. Thus one may expect, and this is indeed the
case, that max algebra and nonnegative linear algebra
have many interesting properties in common\footnote{Referring to
max algebra spectral theory, Gaubert~\cite{GP-97} remarks ``The
theory is extremely similar to the well known Perron-Frobenius
theory''.}. For example, the
Frobenius (normal) from of a reducible matrix plays an important
role in the study of reducible matrices in both theories. In view of
the above discussion, it is not surprising that a comparison of
spectral properties of reducible matrices shows that one needs to replace
strict inequalities in classical nonnegative spectral theory by weak
inequalities in max algebraic spectral theory, see~\cite{BCG-09} for
a remark along these lines concerning eigenvectors.

The above notation employing $\oplus$ and $\otimes$  is standard in
max algebra. However, as many results of the present paper are true
both in max algebra and in nonnegative linear algebra, it will be
convenient to write $a+b$ for $\max(a,b)$ when the argument works in
both theories. On the other hand, we emphasize by using the specific
max algebraic notation when this is not the case.

\section{The spectral problem in max algebra}\label{s:SpectralProblem}

We recall here some notation and basic facts about the spectral
problem in max algebra, which we use further in this paper.
See~\cite{BCOQ,But-10,CG:79,HOW-05} for general reference and more
information.

The {\em max algebraic spectral problem} for $A\in\Rp^{n\times n}$
consists in finding {\em eigenvalues} $\alpha \in \Rp$ and nonzero
{\em eigenvectors} $v \in \Rp^n$ such that $A v=\alpha v$ is
satisfied. Observe that the set $V(A,\alpha):=\{v\mid A v=\alpha
v\}$ is a (max) {\em cone} of $\Rp^n$, that is, a subset of $\Rp^n$
closed under (max) addition and (nonnegative) scalar multiplication.
This cone will be called the {\em eigencone} of $A$ associated with
$\alpha$. The set of eigenvalues, which is nonempty like in the usual
Perron-Frobenius theory, is called the {\em spectrum} of $A$ and
denoted $\Lambda(A)$. The largest eigenvalue of $A$ will be denoted
$\lambda(A)$ and called the {\em Perron root} of $A$ (since we want
the same terminology for max algebra and nonnegative matrix
algebra), and the associated eigencone will be called the {\em
principal eigencone} of $A$.

Unlike in the case of classical algebra,
there is an explicit formula for the max algebraic Perron root of
$A=(a_{ij})\in\Rp^{n\times n}$:
\begin{equation}\label{mcm}
\lambda(A)=\bigoplus_{k=1}^n\bigoplus_{i_1,\ldots,i_k}
(a_{i_1i_2} \cdots a_{i_ki_1})^{1/k}.
\end{equation}
This is also known as the maximal cycle (geometric) mean of $A$.

For $A \in \Rpnn$ we construct the associated digraph
$\assgraph=(N,E)$  by setting $N = \{1,\ldots,n\}$ and
letting $(i,j) \in E$ whenever $a_{ij} >0$.
When this digraph contains at least one cycle,
one distinguishes {\em critical cycles},
where the maximum in~\eqref{mcm} is attained. Further,
one constructs the {\em critical digraph} $\crit(A)=(N_c^A, E_c^A)$,
which consists of all the nodes $N_c^A$ and edges
$E_c^A$ of $\assgraph$ on critical cycles.
The nodes in $N_c^A$ will be called {\em critical nodes} or {\em eigennodes}.

The critical digraph is closely related to the series
\begin{equation}\label{kls}
A^*=I\oplus A\oplus A^2\oplus \cdots \enspace ,
\end{equation}
where $I$ is the unit matrix. This series is known to converge if,
and only if, $\lambda(A)\leq 1$,
in which case it is called the {\em Kleene star} of $A$.
If $\lambda(A)\leq 1$, then this series can be truncated:
$A^*=I\oplus A\oplus A^2\oplus\ldots\oplus A^{n-1}$.

For $A\in\Rpnn$ such that $\lambda(A)=1$, the principal eigencone is
the set of max-linear combinations of all columns of $A^*$ with
indices in $N_c^A$:
\begin{equation}\label{princ-eigen}
V(A,1)=\left\{\oplus_{i\in N_c^A} \beta_i
A^*_{\cdot i} \mid \beta_i\in\Rp\right\} \enspace .
\end{equation}
In particular, we have
\begin{equation}\label{fund-eigv}
AA^*_{\cdot i}=A^*_{\cdot i}\enspace ,
\quad A^*_{i\cdot }A=A^*_{i \cdot } \enspace ,\quad \forall i\in N_c^A \enspace .
\end{equation}
Thus, unlike in the usual Perron-Frobenius theory, even if
$A\in\Rpnn$ is irreducible (that is, the associated digraph
is strongly connected), the principal eigencone in max algebra
may contain more than just one ray. However, for irreducible $A$,
$\lambda(A)$ given by~\eqref{mcm} is the only eigenvalue
and every eigenvector is positive,
see Theorem~\ref{t:fundspec} below.

By standard optimal path algorithms, the critical digraph and the
columns of $A^*$ can be computed in $O(n^3)$ operations.
For further details we refer the reader to~\cite{But-10,HOW-05,BCOQ}.

A (max) cone $K\subset \Rp^n$ is said to be finitely generated if it is
the set of max-linear combinations of a finite subset of vectors of
$\Rp^n$. Equivalently, a cone $K\subset \Rp^n$ is finitely generated
if there exists a matrix $X\in \Rp^{n\times r}$, for some $r\in \N$,
such that $K=\Img(X)$, where as usual $\Img(X):=\{Xu\mid u\in
\Rp^r\}$. Observe that if $K$ is not trivial, we may assume that $X$
does not have a null column. By~\eqref{princ-eigen}, it follows that
the principal eigencone is finitely generated. Indeed, this property
holds for any eigencone of $A$, see e.g.~\cite[Theorem~4.1]{BCG-09}.
Therefore, in what follows, for $\alpha \in \Lambda(A)$ we shall
denote by $X^A_\alpha$ any matrix with nonzero columns satisfying
$V(A,\alpha)=\Img(X^A_\alpha)$.

Let us finally mention that like in classical algebra,
any finite intersection of finitely generated
(max) cones is also finitely generated (this property follows
from~\cite{BH-84}, see e.g.~\cite[Theorem~1]{GK09}).

We summarize the main properties that will be used
in this paper in the next proposition.

\begin{proposition}\label{p:allpropert}
In max algebra the following statements hold:
\begin{enumerate}[(i)]
\item Every matrix has an eigenvalue with a corresponding eigenvector;
\item Eigencones are finitely generated;
\item The intersection of two finitely generated (max) cones is finitely generated.
\end{enumerate}
\end{proposition}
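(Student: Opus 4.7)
The plan is to verify each item of Proposition~\ref{p:allpropert} by pointing back to the formulas and references already assembled in Section~\ref{s:SpectralProblem}; the proposition is really an organizing summary rather than a new technical result, so the proof should be a short assembly rather than anything with a genuine obstacle.

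For (i), the Perron root \(\lambda(A)\) given by the cycle-mean formula~\eqref{mcm} is the natural candidate eigenvalue. If \(\lambda(A)>0\), I would normalize by passing to \(\lambda(A)^{-1}A\) so that \(\lambda=1\). The maximum in~\eqref{mcm} is then attained on some cycle, so \(N_c^A\ne\emptyset\) and the Kleene star \(A^*\) exists; by~\eqref{fund-eigv}, for any \(i\in N_c^A\) the column \(A^*_{\cdot i}\) is a nonzero eigenvector with eigenvalue \(1\), and rescaling back yields an eigenvector of \(A\) for \(\lambda(A)\). In the degenerate case \(\lambda(A)=0\), the associated digraph has no cycle of positive weight; if some column of \(A\) is zero the corresponding standard basis vector is an eigenvector with eigenvalue \(0\), while otherwise one chooses \(v\ne0\) maximizing \(k\) with \(A^kv\ne0\) (such \(k\) exists since \(A\) is then nilpotent) and takes \(A^kv\) as a nonzero eigenvector with eigenvalue \(0\).

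For (ii), the principal eigencone \(V(A,\lambda(A))\) is finitely generated by the critical columns of \(A^*\) directly from~\eqref{princ-eigen} (after the usual normalization \(\lambda(A)=1\)). For a non-principal eigenvalue \(\alpha\in\Lambda(A)\), I would simply invoke \cite[Theorem~4.1]{BCG-09}, which provides an analogous finite set of generators for \(V(A,\alpha)\); this is the reference already flagged in the exposition preceding the proposition.

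For (iii), I would cite the results already indicated in the text: the intersection of two finitely generated max cones is finitely generated by~\cite{BH-84}, with an accessible modern proof in~\cite[Theorem~1]{GK09}. Conceptually the idea is a max-plus analogue of Fourier--Motzkin elimination: writing the two cones as images \(\Img(X)\) and \(\Img(Y)\), the intersection becomes the image (under either \(X\) or \(Y\)) of the solution set of a finite system of two-sided max-linear equations \(Xu=Yv\), and that solution set is itself finitely generated.

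Since every part reduces to material already laid out in Section~\ref{s:SpectralProblem}, I do not anticipate a real obstacle. The only mildly delicate point is the \(\lambda(A)=0\) case of~(i), which has no counterpart in the classical Perron--Frobenius setting and is most cleanly handled by the nilpotency argument above.
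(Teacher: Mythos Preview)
Your proposal is correct and matches the paper's treatment: the proposition is stated as a summary of the exposition in Section~\ref{s:SpectralProblem} and is not given a separate proof, so pointing to~\eqref{mcm},~\eqref{princ-eigen},~\eqref{fund-eigv}, \cite[Theorem~4.1]{BCG-09}, and the references~\cite{BH-84,GK09} is exactly what is intended. Your handling of the degenerate case $\lambda(A)=0$ in~(i) is a welcome elaboration (the paper leaves it implicit), though note that an acyclic digraph on $N$ always has a source, so $A$ necessarily has a zero column and the nilpotency branch is in fact subsumed by the first.
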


Further information on max algebra spectral theory
will be given in Section~\ref{s:fnf}.

\section{Existence of common eigenvectors}\label{s:commoneig}

\subsection{Common eigenvector of two matrices}

In this section on max algebra we prove that two commuting matrices
have a common eigenvector.
With this aim, we shall need the following lemma.

\begin{lemma}\label{l:InvEigenCone}
If $A,B\in\R_+^{n\times n}$ commute, then any eigencone
$V(A,\alpha)$ of $A$ is invariant under $B$ and any eigencone
$V(B,\alpha)$ of $B$ is invariant under $A$.
\end{lemma}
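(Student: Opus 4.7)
The plan is to give a one-line associativity argument, essentially the same as in classical linear algebra, since the max-times semiring is associative and nonnegative scalars commute with matrix multiplication. By symmetry it suffices to prove one of the two invariance statements; the other follows by interchanging the roles of $A$ and $B$.

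First I would take an arbitrary $v \in V(A,\alpha)$, so $Av = \alpha v$, and compute $A(Bv)$. Using the associativity of the max-times matrix product, the hypothesis $AB = BA$, and the fact that multiplication by the scalar $\alpha \in \mathbb{R}_+$ commutes with multiplication by $B$, I would show
\[
A(Bv) = (AB)v = (BA)v = B(Av) = B(\alpha v) = \alpha (Bv) \enspace ,
\]
so $Bv \in V(A,\alpha)$. Thus $V(A,\alpha)$ is invariant under $B$. Interchanging $A$ and $B$ gives that $V(B,\alpha)$ is invariant under $A$.

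There is essentially no obstacle here: the argument only uses associativity of $\otimes$, distributivity of scalar multiplication, and commutativity of $A$ and $B$, all of which hold in the max-times semiring just as in classical algebra. I would also remark that the same proof works verbatim over $(\mathbb{R}_+,+,\times)$, which is useful later when transferring results to the classical nonnegative setting discussed in Section~\ref{s:classical}.
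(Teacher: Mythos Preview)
Your proof is correct and essentially identical to the paper's own argument: both take $v\in V(A,\alpha)$ and compute $A(Bv)=(AB)v=(BA)v=B(Av)=\alpha Bv$ to conclude $Bv\in V(A,\alpha)$, then invoke symmetry. Your version is merely a bit more explicit about where associativity and scalar commutativity are used.
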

\begin{proof}
Let $v\in V(A,\alpha)$. For $u=Bv$, we have
\begin{equation}
Au=ABv=BAv=\alpha Bv=\alpha u \enspace .
\end{equation}
Therefore, $B(V(A,\alpha )) \subset V(A,\alpha )$.
\end{proof}

Now it is possible to prove the following result,
which relates the eigencones of two commuting matrices.

\begin{theorem}\label{t:commoneig}
If $A,B\in\R_+^{n\times n}$ commute, then for
any eigencone $V(A,\alpha)$ of $A$ there exists an eigencone
$V(B,\mu)$ of $B$ such that $V(A,\alpha)\cap V(B,\mu)$ contains a
nonzero vector.
\end{theorem}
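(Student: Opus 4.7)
The plan is to exploit the invariance from Lemma~\ref{l:InvEigenCone} together with the finitely generated nature of eigencones to reduce the problem to the existence of a single eigenvalue/eigenvector pair of a smaller matrix.

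First, I would represent the eigencone as $V(A,\alpha) = \Img(X^A_\alpha)$ for some matrix $X^A_\alpha \in \Rp^{n\times r}$ with nonzero columns, using the fact that eigencones are finitely generated (Proposition~\ref{p:allpropert}(ii)). By Lemma~\ref{l:InvEigenCone}, $B$ leaves $V(A,\alpha)$ invariant, so for each $j \in \{1,\ldots,r\}$ the vector $B X^A_{\alpha,\cdot j}$ lies in $\Img(X^A_\alpha)$. Hence there exist nonnegative column vectors $b'_j \in \Rp^r$ such that $B X^A_{\alpha,\cdot j} = X^A_\alpha b'_j$. Collecting these into a matrix $B' \in \Rp^{r\times r}$ yields the key identity
\begin{equation*}
B X^A_\alpha = X^A_\alpha B'.
\end{equation*}

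Next, I would apply Proposition~\ref{p:allpropert}(i) to $B'$, obtaining an eigenvalue $\mu \in \Rp$ and a nonzero eigenvector $w \in \Rp^r$ with $B' w = \mu w$. Define $v := X^A_\alpha w$. Then $v \in \Img(X^A_\alpha) = V(A,\alpha)$, and
\begin{equation*}
B v = B X^A_\alpha w = X^A_\alpha B' w = \mu X^A_\alpha w = \mu v,
\end{equation*}
so $v \in V(B,\mu)$ as well. It remains to verify $v \neq 0$: since $w$ is nonzero, some entry $w_i > 0$, and because the $i$-th column of $X^A_\alpha$ is nonzero by construction, the vector $v$ inherits a positive entry. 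This gives a nonzero element of $V(A,\alpha) \cap V(B,\mu)$, as required.

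The only genuinely delicate point is the passage from the invariance $B(V(A,\alpha)) \subset V(A,\alpha)$ to the matrix identity $B X^A_\alpha = X^A_\alpha B'$; this is not a max-algebraic subtlety but simply a matter of choosing, for each generator $X^A_{\alpha,\cdot j}$, a vector of coefficients $b'_j$ representing $B X^A_{\alpha,\cdot j}$ in terms of the generators (such coefficients exist by the definition of $\Img$, though they need not be unique). Once this reduction to the $r \times r$ matrix $B'$ is in place, the existence of a common nonzero element in $V(A,\alpha) \cap V(B,\mu)$ follows immediately from the basic max-algebraic spectral theorem applied to $B'$. Note also that Proposition~\ref{p:allpropert}(iii) guarantees $V(A,\alpha) \cap V(B,\mu)$ is itself a finitely generated cone, although this stronger structural fact is not needed for the existence statement.
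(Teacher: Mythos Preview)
Your proof is correct and follows essentially the same approach as the paper: represent $V(A,\alpha)$ as the image of a generator matrix $X^A_\alpha$ with nonzero columns, use the invariance of this cone under $B$ to obtain a square matrix $B'$ (the paper calls it $C$) with $BX^A_\alpha = X^A_\alpha B'$, and then push an eigenvector of $B'$ through $X^A_\alpha$ to obtain the desired common eigenvector. Your explicit column-by-column construction of $B'$ and your remark on why $v\neq 0$ are slightly more detailed than the paper, but the argument is the same.
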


\begin{proof}
Let $V(A,\alpha)=\Img(X^A_\alpha)$ be an eigencone of $A$. Then,
\begin{equation}
AX_{\alpha}^A=\alpha X_{\alpha}^A \enspace ,
\end{equation}
and since by Lemma~\ref{l:InvEigenCone} we have
$B(\Img(X^A_\alpha))\subset \Img(X^A_\alpha)$, there exists
a (nonnegative square) matrix $C$ such that $BX_{\alpha}^A=X_{\alpha}^A C$.
Let $z$ be any eigenvector of $C$,
so that $Cz=\mu z$ and $z\neq 0$, and consider $u=X_{\alpha}^A z$.
Then, $u\neq 0$ (recall that all the columns of $X_{\alpha}^A$ are
nonzero) and we obtain
\[
Au = AX_{\alpha}^A z=\alpha X_{\alpha}^A z=\alpha u
\]
and
\[
Bu =BX_{\alpha}^A z=X_{\alpha}^A C z= \mu X_{\alpha}^A z=\mu u \enspace .
\]
Thus, $u\in V(A,\alpha)\cap V(B,\mu)$.
\end{proof}

As an immediate consequence, we obtain:

\begin{corollary}\label{c:commoneig}
If $A,B\in\R_+^{n\times n}$ commute,
then they have a common eigenvector.
\end{corollary}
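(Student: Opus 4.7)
The plan is to derive the corollary as a direct consequence of Theorem~\ref{t:commoneig} together with Proposition~\ref{p:allpropert}(i). Since every matrix in max algebra has at least one eigenvalue with an associated eigenvector (by Proposition~\ref{p:allpropert}(i)), the matrix $A$ admits some eigenvalue $\alpha \in \Lambda(A)$ and a corresponding nontrivial eigencone $V(A,\alpha)$.

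Next, I would apply Theorem~\ref{t:commoneig} to the pair $(A,B)$ and the chosen eigencone $V(A,\alpha)$. This yields an eigenvalue $\mu \in \Lambda(B)$ and a nonzero vector $u \in V(A,\alpha) \cap V(B,\mu)$. By the very definition of the eigencones, $u$ satisfies $A u = \alpha u$ and $B u = \mu u$ simultaneously, so $u$ is a common eigenvector of $A$ and $B$, which is exactly the conclusion.

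There is really no obstacle here: the entire substance has already been packed into Theorem~\ref{t:commoneig}, where the key idea was to represent the $B$-invariant eigencone $V(A,\alpha)$ as $\Img(X^A_\alpha)$, realize the action of $B$ on it by a nonnegative matrix $C$ via $B X^A_\alpha = X^A_\alpha C$, and then invoke the existence of an eigenvector of $C$ (again via Proposition~\ref{p:allpropert}(i)) to pull back a common eigenvector. The corollary merely specializes this statement by forgetting the information about which eigencones are intersected. Accordingly, I would keep the proof to one or two lines, simply citing Proposition~\ref{p:allpropert}(i) to produce $\alpha \in \Lambda(A)$ and then invoking Theorem~\ref{t:commoneig} to produce the desired $u$.
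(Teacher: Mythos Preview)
Your proposal is correct and matches the paper's approach exactly: the paper simply states the corollary as ``an immediate consequence'' of Theorem~\ref{t:commoneig}, and your one-line derivation via Proposition~\ref{p:allpropert}(i) (to guarantee some $\alpha\in\Lambda(A)$) followed by Theorem~\ref{t:commoneig} is precisely how that immediate consequence is meant to be read.
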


We remark that our proof of Theorem~\ref{t:commoneig}
also shows the following result:

\begin{proposition}\label{p:common}
Let $A \in \Rnn_+$ and let $K$ be a (nontrivial) finitely generated cone of
$\Rp^n$. If $AK \subseteq K$, then $A$ has an eigenvector in $K$.
\end{proposition}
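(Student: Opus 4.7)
The plan is to mimic the proof of Theorem~\ref{t:commoneig}, but replacing the invariant eigencone $V(A,\alpha)$ by the arbitrary finitely generated invariant cone $K$, and replacing the role of $B$ by $A$ itself.

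First I would use that $K$ is finitely generated and nontrivial to write $K = \Img(X)$ for some matrix $X \in \Rp^{n\times r}$ with no zero columns (this normalization was recorded in the discussion preceding Proposition~\ref{p:allpropert}). Next, since $AK \subseteq K$, every column $AX_{\cdot j}$ lies in $\Img(X)$, so I can exhibit a (nonnegative, square) matrix $C \in \Rp^{r\times r}$ such that
\[
AX = XC.
\]
This is precisely the step that produced the matrix $C$ in the proof of Theorem~\ref{t:commoneig}; the only change is that here $C$ encodes the action of $A$ on the generators of $K$, rather than the action of a commuting matrix $B$ on an eigencone of $A$.

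Then I would apply Proposition~\ref{p:allpropert}(i) to $C$ to obtain a nonzero eigenvector $z \in \Rp^r$ of $C$, say $Cz = \mu z$, and set $u := Xz \in K$. The computation
\[
Au = AXz = XCz = \mu Xz = \mu u
\]
shows that $u$ is an eigenvector of $A$ with eigenvalue $\mu$, provided $u \neq 0$.

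The only thing requiring a (trivial) remark is nonvanishing of $u$. Since $z$ is nonzero some coordinate $z_k$ is strictly positive, and the corresponding column $X_{\cdot k}$ is nonzero by the standing assumption on $X$; hence the max-linear combination $Xz = \bigoplus_j z_j X_{\cdot j}$ has at least one positive entry. I expect this to be the only place where a small subtlety could arise, but it is dispatched immediately by recalling that finitely generated nontrivial cones admit a generating matrix without zero columns. No further obstacle is anticipated, which is consistent with the author's remark that Theorem~\ref{t:commoneig}'s proof already contains this argument.
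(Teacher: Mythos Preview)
Your proposal is correct and is precisely the argument the paper has in mind: it explicitly states that the proof of Theorem~\ref{t:commoneig} already yields Proposition~\ref{p:common}, and your write-up (choose $X$ with nonzero columns so that $K=\Img(X)$, get $AX=XC$ from $AK\subseteq K$, pick an eigenvector $z$ of $C$, and set $u=Xz$) is exactly that argument specialized to the cone $K$ in place of $V(A,\alpha)$.
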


\subsection{Common eigenvector of several matrices}

The results above can be generalized to several pairwise commuting
matrices.

\begin{theorem}\label{t:cv-rmats}
Assume the matrices $A_1,\ldots ,A_r \in \Rnn_+$ commute in pairs.
Then, given any eigenvalue $\alpha_i\in\Lambda(A_i)$,
where $i\in \{1,\ldots ,r \}$,
there exist $\alpha_j\in\Lambda(A_j)$ for $j\neq i$
such that $V(A_1,\alpha_1)\cap \cdots \cap V(A_r,\alpha_r )$
contains a nonzero vector.
\end{theorem}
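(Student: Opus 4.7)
The plan is to prove this by induction on $r$, using the two-matrix result (Theorem~\ref{t:commoneig}, or rather its variant Proposition~\ref{p:common}) together with the fact from Proposition~\ref{p:allpropert}(iii) that intersections of finitely generated max cones remain finitely generated. By relabeling the matrices we may assume $i=1$, so that a fixed eigenvalue $\alpha_1\in\Lambda(A_1)$ is given and we must produce $\alpha_2,\ldots,\alpha_r$ with $\bigcap_{j=1}^r V(A_j,\alpha_j)\neq\{0\}$.

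First I would set $K_1 := V(A_1,\alpha_1)$, which is nontrivial by hypothesis, finitely generated by Proposition~\ref{p:allpropert}(ii), and invariant under each of $A_2,\ldots,A_r$ by Lemma~\ref{l:InvEigenCone} (since all $A_j$ commute with $A_1$). I would then iteratively build, for $s=2,\ldots,r$, a nontrivial finitely generated cone $K_s$ with $K_s \subseteq \bigcap_{j=1}^s V(A_j,\alpha_j)$ and such that $K_s$ is invariant under $A_{s+1},\ldots,A_r$. At step $s$, assuming $K_{s-1}$ has been constructed with these properties, I apply Proposition~\ref{p:common} to $A_s$ and $K_{s-1}$: since $A_s(K_{s-1})\subseteq K_{s-1}$ and $K_{s-1}$ is a nontrivial finitely generated cone, $A_s$ has an eigenvector in $K_{s-1}$, yielding some $\alpha_s\in\Lambda(A_s)$ for which $K_{s-1}\cap V(A_s,\alpha_s)\neq\{0\}$. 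Set $K_s := K_{s-1}\cap V(A_s,\alpha_s)$.

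It remains to check that this $K_s$ meets the inductive requirements so the process can continue. Nontriviality has just been noted. $K_s$ is finitely generated because it is an intersection of two finitely generated max cones (Proposition~\ref{p:allpropert}(iii); note $V(A_s,\alpha_s)$ is finitely generated by Proposition~\ref{p:allpropert}(ii)). For invariance, fix $j>s$ and $v\in K_s$: then $v\in K_{s-1}$ gives $A_j v\in K_{s-1}$ by the inductive hypothesis, while $v\in V(A_s,\alpha_s)$ together with the commutation $A_sA_j=A_jA_s$ gives, via Lemma~\ref{l:InvEigenCone}, that $A_jv\in V(A_s,\alpha_s)$; hence $A_jv\in K_s$. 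After $r-1$ such steps, $K_r\subseteq \bigcap_{j=1}^r V(A_j,\alpha_j)$ is nonzero, as desired.

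The only delicate point — and the step I would flag as the main obstacle — is ensuring that the hypotheses of Proposition~\ref{p:common} are maintained at every iteration; that is, that the successive cones $K_s$ remain both finitely generated and invariant under the remaining matrices $A_{s+1},\ldots,A_r$. Finite generation is secured by Proposition~\ref{p:allpropert}(iii), and invariance propagates precisely because all the $A_j$ commute pairwise, so Lemma~\ref{l:InvEigenCone} can be reapplied at each stage. A direct attempt to mimic the proof of Theorem~\ref{t:commoneig} — extracting matrices $C_j$ from $A_jX^{A_1}_{\alpha_1}=X^{A_1}_{\alpha_1}C_j$ and hoping the $C_j$ commute — would be problematic in max algebra, where such representing matrices need not be unique; the inductive formulation above sidesteps this issue entirely.
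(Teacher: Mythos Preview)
Your proof is correct and follows essentially the same approach as the paper's: both argue by induction on the number of matrices, use finite generation of the successive intersections (Proposition~\ref{p:allpropert}), and exploit invariance of these cones under the remaining matrices via Lemma~\ref{l:InvEigenCone}. The only cosmetic difference is that the paper writes out the inductive step by choosing a generating matrix $X$ for the intersection, obtaining $A_{k+1}X=XC$, and taking an eigenvector of $C$, whereas you package this step as a direct appeal to Proposition~\ref{p:common}; since the paper itself remarks that Proposition~\ref{p:common} is exactly what that argument establishes, the two presentations are equivalent.
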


\begin{proof}
The case $r=2$ is precisely Theorem~\ref{t:commoneig}.
So assume that the statement of the theorem holds for $r=k$ and let
$A_1,\ldots ,A_k,A_{k+1}$ be $k+1$ matrices which commute in pairs.

Without loss of generality, assume $\alpha_1 \in \Lambda (A_1)$ is
given. By the induction hypothesis,
there exist $\alpha_j \in \Lambda(A_j)$, for $j=2,\ldots ,k$,
such that $V(A_1,\alpha_1)\cap \cdots \cap V(A_k,\alpha_k )$
contains a nonzero vector. Moreover,
since by Proposition~\ref{p:allpropert} any eigencone is
finitely generated and any finite
intersection of finitely generated max cones is also finitely
generated, there exists a (nonnegative) matrix $X$ such that
$V(A_1,\alpha_1)\cap \cdots \cap V(A_k,\alpha_k)=\Img(X)$.
Note that we may assume, without loss of generality,
that all the columns of $X$ are nonzero
because $\Img(X)$ contains nonzero vectors.

Since $A_i$ and $A_{k+1}$ commute for $i=1,\ldots,k$,
by Lemma~\ref{l:InvEigenCone} it follows that
\[ 
A_{k+1}(V(A_i,\alpha_i))\subseteq V(A_i,\alpha_i)
\]
for $i=1,\ldots ,k$. Therefore,
$A_{k+1}(\Img(X))=A_{k+1}(\cap_{i=1}^kV(A_i,\alpha_i))\subset
\cap_{i=1}^kV(A_i,\alpha_i)=\Img(X)$ and thus there exists a
(nonnegative square) matrix $C$ such that $A_{k+1}X=XC$.

As in the proof of Theorem~\ref{t:commoneig}, let $z$ be any
eigenvector of $C$ so that $Cz=\mu z$,
for some $\mu \in \Lambda (C)$,
and define $u=X z$.
Since $z\neq 0$ and the columns of $X$ are nonzero,
we have $u\neq 0$ and
\[
A_{k+1}u=A_{k+1} X z=X C z=\mu  X z=\mu u \enspace .
\]
Thus, $u\in \Img(X)\cap V(A_{k+1},\mu)=V(A_1,\alpha_1)\cap \cdots
\cap V(A_k,\alpha_k)\cap V(A_{k+1},\mu)$.
\end{proof}

Next we investigate the eigenvalues of polynomials of commuting matrices.
Let us recall that a {\em max polynomial} is obtained by replacing in a
real polynomial (with nonnegative coefficients) the usual sum by the maximum.

\begin{theorem}\label{t:polynomials}
Let $A_1,\ldots,A_r \in \Rpnn$  commute in pairs and let
$p(x_1,\ldots,x_r)$ be a max polynomial. Then,
\begin{enumerate}[(i)]
\item\label{t:p1}
For each $i\in \{ 1,\ldots ,r \} $ and $\alpha_i\in\Lambda(A_i)$,
there exist $\alpha_j\in\Lambda(A_j)$ for all $j\neq i$ such that
$p(\alpha_1,\ldots,\alpha_r)\in \Lambda(p(A_1,\ldots,A_r))$;
\item\label{t:p2}
For each $\lambda\in\Lambda(p(A_1,\ldots,A_r))$ there exist
$\alpha_i\in \Lambda(A_i)$ for all $i=1,\ldots,r$ such that
$\lambda=p(\alpha_1,\ldots,\alpha_r)$.
\end{enumerate}
\end{theorem}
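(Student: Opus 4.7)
The plan for part~(i) is to apply Theorem~\ref{t:cv-rmats} to the pairwise commuting family $A_1,\ldots,A_r$: given $\alpha_i\in\Lambda(A_i)$, we obtain $\alpha_j\in\Lambda(A_j)$ for $j\neq i$ together with a nonzero common eigenvector $v\in V(A_1,\alpha_1)\cap\cdots\cap V(A_r,\alpha_r)$. The remaining task is to verify that such a $v$ is an eigenvector of $p(A_1,\ldots,A_r)$ with eigenvalue $p(\alpha_1,\ldots,\alpha_r)$. Writing $p$ as a (max) sum of monomials $c_{k_1,\ldots,k_r}\,x_1^{k_1}\cdots x_r^{k_r}$ and using pairwise commutativity to reorder factors inside each max-monomial, each monomial applied to $v$ reduces via $A_i^{k_i}v=\alpha_i^{k_i}v$ to $c_{k_1,\ldots,k_r}\alpha_1^{k_1}\cdots \alpha_r^{k_r}v$. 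Taking the max over monomials yields $p(A_1,\ldots,A_r)v=p(\alpha_1,\ldots,\alpha_r)v$, proving~(i).

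For part~(ii), I would observe that $p(A_1,\ldots,A_r)$ is itself a max-polynomial in matrices which commute pairwise, so it commutes with each $A_i$ (distributivity of matrix multiplication over $\oplus$ in the semiring reduces this to the monomial case). Given $\lambda\in\Lambda(p(A_1,\ldots,A_r))$, I would then apply Theorem~\ref{t:cv-rmats} to the extended commuting family $\{p(A_1,\ldots,A_r),A_1,\ldots,A_r\}$, initialising the inductive procedure with the prescribed eigenvalue $\lambda$ of $p(A_1,\ldots,A_r)$. This produces $\alpha_i\in\Lambda(A_i)$, for $i=1,\ldots,r$, together with a nonzero vector $v$ satisfying $p(A_1,\ldots,A_r)v=\lambda v$ and $A_iv=\alpha_iv$ for all $i$. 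Invoking the computation from part~(i), we also have $p(A_1,\ldots,A_r)v=p(\alpha_1,\ldots,\alpha_r)v$, and since $v\neq 0$ we conclude $\lambda=p(\alpha_1,\ldots,\alpha_r)$.

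The main (mild) obstacle is the monomial-expansion step in part~(i): one has to record carefully that pairwise commutativity of the $A_i$ allows factors in an arbitrary monomial to be rearranged so that the repeated copies of each $A_i$ collapse sequentially onto the common eigenvector $v$, yielding the scalar $\alpha_i^{k_i}$, and that forming the max of these scalar multiples on the single vector $v$ agrees with the action of $p(A_1,\ldots,A_r)$ on $v$. Both points are direct consequences of the definitions of $\oplus$ and $\otimes$ and the commutativity hypothesis, but they are what bridges Theorem~\ref{t:cv-rmats} to the statement about polynomials; once they are in place, a single application of Theorem~\ref{t:cv-rmats} suffices for each of~(i) and~(ii).
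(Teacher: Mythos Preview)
Your proof is correct and follows essentially the same approach as the paper: both parts invoke Theorem~\ref{t:cv-rmats} (for~(ii) on the enlarged commuting family including $p(A_1,\ldots,A_r)$) to obtain a common eigenvector, then use the routine identity $p(A_1,\ldots,A_r)v=p(\alpha_1,\ldots,\alpha_r)v$. The paper simply asserts this identity and the commutativity of $p(A_1,\ldots,A_r)$ with each $A_i$ without the monomial-by-monomial justification you spell out, but the argument is the same.
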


\begin{proof}
\eqref{t:p1} Let $i\in\{1,\ldots,r\}$ and $\alpha_i\in\Lambda(A_i)$.
By Theorem~\ref{t:cv-rmats}, there exist $\alpha_j\in\Lambda(A_j)$
for all $j\neq i$ and a nonzero vector $v \in \Rpn$ such that
$A_i v=\alpha_i v$ for all $i=1,\ldots, r$. But then we also have
$p(A_1,\ldots,A_r)v=p(\alpha_1,\ldots,\alpha_r)v$, and so
$p(\alpha_1,\ldots,\alpha_r)\in \Lambda(p(A_1,\ldots,A_r))$. \\
\eqref{t:p2} Let $\lambda\in\Lambda(p(A_1,\ldots,A_r))$.
Since $A_1,\ldots,A_r$ and $p(A_1,\ldots,A_r)$ commute in pairs,
by Theorem~\ref{t:cv-rmats} there is an eigenvector
$v\in V(p(A_1,\ldots,A_r),\lambda)$ which is also an eigenvector of
$A_i$ associated with some eigenvalue $\alpha_i\in \Lambda(A_i)$,
for all $i=1,\ldots,r$. But then
$\lambda v=p(A_1,\ldots,A_r) v = p(\alpha_1,\ldots,\alpha_r) v$
and it follows that $\lambda=p(\alpha_1,\ldots,\alpha_r)$.
\end{proof}

\begin{corollary}\label{c:spineqs}
Let $A_1,\ldots, A_r\in\Rpnn$ commute in pairs and
let $p(x_1,\ldots,x_r)$ be a max polynomial. Then,
\begin{enumerate}[(i)]
\item\label{c:spineqs3}
$\lambda(p(A_1,\ldots,A_r))\leq p(\lambda(A_1),\ldots,\lambda(A_r))$;
\item\label{c:spineqs1} $\lambda(A_1+\cdots+ A_r)\leq
\lambda(A_1)+\cdots+\lambda(A_r)$;
\item\label{c:spineqs2} $\lambda(A_1\cdots A_r)\leq
\lambda(A_1)\cdots \lambda(A_r)$.
\end{enumerate}
Moreover, equality holds in all the above relations if the matrices
$A_1,\ldots, A_r$ are irreducible.
\end{corollary}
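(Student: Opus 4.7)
The plan is to observe first that statements (ii) and (iii) are just the special cases of (i) obtained by taking the max polynomials $p(x_1,\dots,x_r)=x_1+\cdots+x_r$ and $p(x_1,\dots,x_r)=x_1\cdots x_r$, respectively, so the whole corollary reduces to establishing (i) and its equality case.

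For the inequality in (i), I would apply Theorem~\ref{t:polynomials}(\ref{t:p2}) to the eigenvalue $\lambda:=\lambda(p(A_1,\dots,A_r))$: this yields eigenvalues $\alpha_i\in\Lambda(A_i)$, $i=1,\dots,r$, with $\lambda=p(\alpha_1,\dots,\alpha_r)$. Since $\alpha_i\le\lambda(A_i)$ for every $i$ by definition of the Perron root, and since a max polynomial has nonnegative coefficients and hence is nondecreasing in each of its arguments, one concludes
\[
\lambda(p(A_1,\dots,A_r))=p(\alpha_1,\dots,\alpha_r)\le p(\lambda(A_1),\dots,\lambda(A_r)),
\]
which is (i); plugging in the sum and product polynomials then gives (ii) and (iii).

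For the equality statement in the irreducible case, the key input is the basic spectral fact recalled in Section~\ref{s:SpectralProblem}: if $A_i$ is irreducible, then $\Lambda(A_i)=\{\lambda(A_i)\}$. Now apply Theorem~\ref{t:polynomials}(\ref{t:p1}): starting from $\alpha_1=\lambda(A_1)$, the theorem produces eigenvalues $\alpha_j\in\Lambda(A_j)$ for $j\ne 1$ such that $p(\alpha_1,\dots,\alpha_r)\in\Lambda(p(A_1,\dots,A_r))$. But by irreducibility each $\Lambda(A_j)$ is the singleton $\{\lambda(A_j)\}$, so necessarily $\alpha_j=\lambda(A_j)$ for every $j$. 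Therefore $p(\lambda(A_1),\dots,\lambda(A_r))\in\Lambda(p(A_1,\dots,A_r))$, which gives the reverse inequality
\[
p(\lambda(A_1),\dots,\lambda(A_r))\le\lambda(p(A_1,\dots,A_r)),
\]
and equality follows. The specializations to sum and product yield equality in (ii) and (iii).

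I do not expect a serious obstacle here: the whole argument is a bookkeeping exercise on top of Theorem~\ref{t:polynomials}, whose two halves supply precisely the inequality and equality directions. The only point that needs to be made carefully is the monotonicity of max polynomials in each variable (used for the $\le$ direction), which is immediate because coefficients are nonnegative and the max operation is monotone; and the appeal to the standard fact that irreducible matrices in max algebra have a unique eigenvalue (forcing the $\alpha_j$'s to equal $\lambda(A_j)$ in the equality case).
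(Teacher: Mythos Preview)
Your proposal is correct and follows essentially the same approach as the paper: both derive (i) from Theorem~\ref{t:polynomials} (with (ii) and (iii) as special cases) and obtain the equality in the irreducible case from the uniqueness of the eigenvalue. Your write-up simply makes explicit the details the paper leaves implicit, in particular the use of part~(\ref{t:p2}) together with monotonicity of max polynomials for the inequality, and of part~(\ref{t:p1}) together with $\Lambda(A_j)=\{\lambda(A_j)\}$ for the reverse inequality.
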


\begin{proof}
Part~\eqref{c:spineqs3} follows from Theorem~\ref{t:polynomials},
and parts~\eqref{c:spineqs1} and~\eqref{c:spineqs2} are its special cases.
If the matrices are irreducible, then each of them has unique eigenvalue,
and we have the equalities.
\end{proof}

In the case of max algebra we also have
$\lambda(A_1\oplus\ldots\oplus A_r)\geq \lambda(A_i)$ for all
$i=1,\ldots, r$, as the Perron root expressed by~\eqref{mcm} is
monotonic. Hence~\eqref{c:spineqs1} always holds with equality in
max algebra.

\subsection{Intersection of principal eigencones}\label{s:intersection}

A matrix $Q$ is called a {\em projector} on a
cone $K\subset \R^n_+$ if $\Img(Q)=K$ and $Q^2 = Q$.
This implies that $Qx = x$ if, and only if, $x \in K$.
In general, there are many projectors on the same cone,
but if two such projectors $P,Q$ commute,
then they are identical because we have
$Px = QPx = PQx = Qx$ for all $x \in \R^n_+$.

We recall that the eigencone $V(A,\lambda(A))$ associated with
$\lambda(A)$ (assumed to be nonzero) is called the principal
eigencone of $A$ and a projector on $V(A, \lambda(A))$ which
commutes with $A$ is called a {\em spectral projector} for $A$.
Since $V(A,\lambda(A)) = V(A/\lambda(A),1)$,
there is no loss of generality in assuming that $\lambda(A)=1$.
In max algebra, one can explicitly define such projector.
There are two definitions in the literature:
\begin{equation}\label{e:specproj-bac}
\Tilde{Q}(A)=\bigoplus_{i\in N_c^A} A_{\cdot i}^* A_{i\cdot}^* \enspace ,
\end{equation}
and
\begin{equation}\label{e:specproj-yak}
{Q}(A)=\lim_{p\to\infty} \bigoplus_{m\geq p} A^m \enspace .
\end{equation}
The first of these is found in Baccelli et al.~\cite[Section~3.7.3]{BCOQ},
see also~\cite{CTGG-99}, and the second one,
which is attributed to Yakovenko~\cite{Yak-90},
is found in a more general context in
Kolokoltsov and Maslov~\cite[Section~2.4]{KM:97}.

We shall need the following proposition,
which shows that these projectors are indeed identical.
See~\cite[Theorem 2.11]{KM:97} for a closely related result.

\begin{proposition}\label{p:commproj}
Let $A \in \R_+^{n \times n}$ with $\lambda(A) =1$. Then,
there is a unique spectral projector on $V(A,1)$ which is given,
equivalently, by~\eqref{e:specproj-bac} or~\eqref{e:specproj-yak}.
\end{proposition}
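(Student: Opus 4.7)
The plan is to verify in turn that (i) $\Tilde{Q}(A)$ is a spectral projector on $V(A,1)$, (ii) the Yakovenko limit $Q(A)$ exists and coincides with $\Tilde{Q}(A)$, and (iii) any other spectral projector must equal $\Tilde{Q}(A)$ by the ``commuting projectors are equal'' argument already recorded just before the proposition.

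For (i) I would argue from~\eqref{fund-eigv} that for each $i\in N_c^A$ the column $A^*_{\cdot i}$ is a right eigenvector of $A$ for the eigenvalue $1$ and the row $A^*_{i\cdot}$ is a left eigenvector for $1$; this immediately yields $A\Tilde{Q}(A)=\Tilde{Q}(A)=\Tilde{Q}(A) A$, so $\Tilde{Q}(A)$ commutes with $A$. The image is contained in $V(A,1)$ by~\eqref{princ-eigen}, and the Kleene identity $A^*\otimes A^*=A^*$ (valid because $\lambda(A)\le 1$), restricted to critical indices, gives both $\Tilde{Q}(A)^2=\Tilde{Q}(A)$ and the reverse inclusion $V(A,1)\subseteq \Img(\Tilde{Q}(A))$, using $A^*_{ii}=1$ for $i\in N_c^A$.

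For (ii), since $\lambda(A)=1$ the entries of $A^m$ are uniformly bounded, so the sequence $p\mapsto \bigoplus_{m\geq p}A^m$ is entrywise nonincreasing and nonnegative, and therefore converges entrywise, defining $Q(A)$. The equality $Q(A)=\Tilde{Q}(A)$ is the content of the max-algebraic cyclicity theorem: for any indices $i,j$ and any sufficiently large $m$, the maximum weight of a walk from $i$ to $j$ of length exactly $m$ is attained on a walk traversing a critical cycle, and unfolding such a walk gives $(A^m)_{ij}=\bigoplus_{k\in N_c^A} A^*_{ik} A^*_{kj}$ whenever $m$ lies in a suitable residue class modulo the critical cyclicity. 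Taking $\bigoplus_{m\geq p}$ sweeps over all residues, so the limit equals $\bigoplus_{k\in N_c^A}A^*_{ik}A^*_{kj}=\Tilde{Q}(A)_{ij}$.

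For (iii), let $P$ be any spectral projector on $V(A,1)$. Since $PA=AP$, induction gives $PA^m=A^m P$ for every $m\in\N$, so $P$ commutes with each partial supremum $\bigoplus_{m\geq p}A^m$; passing to the entrywise limit established in (ii) yields $PQ(A)=Q(A)P$. Two commuting projectors on the same cone must coincide by the identity $Px=Q(A)Px=PQ(A)x=Q(A)x$ recalled just before the proposition, so $P=Q(A)=\Tilde{Q}(A)$.

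The principal obstacle is step (ii): the eventual stabilization of $\bigoplus_{m\geq p}A^m$ to $\Tilde{Q}(A)$ is a nontrivial consequence of the cyclicity/spectral theorem for matrix powers in max algebra and requires a careful combinatorial analysis of how walks of large length are forced through critical cycles, together with the fact that critical cycles have weight $1$ so their repeated traversal does not decay the walk weight. Steps (i) and (iii) are then essentially formal manipulations once~\eqref{princ-eigen},~\eqref{fund-eigv} and the convergence in~(ii) are in hand.
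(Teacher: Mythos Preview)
Your proposal is correct, but step~(ii) does far more work than the paper. The paper rewrites the limit as $Q(A)=\lim_{p\to\infty}A^pA^*$; since $A^{p+1}A^*\leq A^pA^*$ this sequence is entrywise nonincreasing, so convergence is immediate with no combinatorics at all. The decisive observation is then that, by continuity of multiplication, any matrix commuting with $A$ also commutes with $Q(A)$---exactly the mechanism you use in step~(iii). The paper simply applies this with $P=\Tilde{Q}(A)$: since $\Tilde{Q}(A)$ commutes with $A$ (your step~(i)), it commutes with $Q(A)$, and as both are projectors on $V(A,1)$ the commuting-projectors identity forces $\Tilde{Q}(A)=Q(A)$. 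Thus the equality of the two formulas drops out of uniqueness rather than being established independently, and the cyclicity theorem is never invoked. Your route through cyclicity is valid and gives a self-contained verification of the equality, but what you flag as ``the principal obstacle'' is in fact entirely avoidable once you notice that your own step~(iii), specialised to $P=\Tilde{Q}(A)$, already subsumes step~(ii). The only small extra ingredient the paper's short-circuit needs is that $Q(A)$ is itself a projector on $V(A,1)$; the paper takes this from the literature, though it follows in one line from the limit (namely $AQ(A)=Q(A)$, and $Q(A)x=x$ whenever $Ax=x$).
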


\begin{proof}
In the first place, note that in the matrix case,
\eqref{e:specproj-yak} may be replaced by
\begin{gather}
\label{e:specproj-kss} Q(A)=\lim_{p\to\infty} A^p A^* \enspace ,
\end{gather}
see the remarks on $A^*$ in Section~\ref{s:SpectralProblem}.
Since by~\eqref{kls} we have $A^{p+1}A^* \leq A^p A^*$,
it follows that the limit in~\eqref{e:specproj-kss} exists.

By the continuity of operations,
$\lim_{p\to\infty} (C_p B)=(\lim_{p\to\infty} C_p) B$
for any converging sequence of matrices $C_p$ and any matrix $B$.
Using this, we observe that if $B$ is any matrix which commutes with $A$,
then $B$ also commutes with $Q(A)$.
Since as shown above any two commuting projectors
on the same cone are identical,
we conclude that any spectral
projector for $A$ is equal to $Q(A)$.
Therefore, in particular we have $\Tilde{Q}(A)=Q(A)$.
\end{proof}

Next we state two lemmas. The first one exploits~\eqref{e:specproj-yak}
and follows from the continuity of multiplication.
The second lemma is standard and its proof is
recalled for the convenience of the reader.

\begin{lemma}
If $A,B\in\Rnn_+$ commute, then $Q(A)$ and $Q(B)$ commute.
\end{lemma}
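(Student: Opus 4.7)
The plan is to bootstrap from the observation already isolated inside the proof of Proposition~\ref{p:commproj}: if a matrix $X\in\R_+^{n\times n}$ commutes with $Y$ (where $\lambda(Y)=1$), then $X$ also commutes with $Q(Y)$. Recall why: by~\eqref{e:specproj-kss} one has $Q(Y)=\lim_{p\to\infty}Y^pY^*$, and since the Kleene star truncates to $Y^*=I\oplus Y\oplus\cdots\oplus Y^{n-1}$, each matrix $Y^pY^*$ is a max-polynomial expression in $Y$; commutation of $X$ with every such expression follows from commutation of $X$ with $Y$, and commutation with the limit then passes through by continuity of matrix multiplication.

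To apply this principle, I would first normalise: replacing $A$ and $B$ by $A/\lambda(A)$ and $B/\lambda(B)$ keeps them commuting and leaves the spectral projectors $Q(A)$, $Q(B)$ unchanged (in the degenerate case $\lambda(A)=0$ or $\lambda(B)=0$ the matrix $Q(\cdot)$ collapses to zero and the conclusion is immediate, so one may assume both Perron roots equal $1$). Then I would invoke the boxed observation twice. First, with $X:=A$ and $Y:=B$, obtaining that $A$ commutes with $Q(B)$. Second, with $X:=Q(B)$ and $Y:=A$, obtaining that $Q(B)$ commutes with $Q(A)$, which is precisely the desired conclusion.

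I anticipate no substantive obstacle: the heavy lifting has effectively been done inside Proposition~\ref{p:commproj}, and the lemma amounts to a symmetric re-use of that argument. The only care needed is the brief normalisation step and the (trivial) handling of the degenerate Perron-root-zero case.
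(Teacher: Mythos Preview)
Your argument is correct and is precisely the approach the paper has in mind: it states only that the lemma ``exploits~\eqref{e:specproj-yak} and follows from the continuity of multiplication,'' which your two-step bootstrap (first pass to $Q(B)$, then to $Q(A)$) makes explicit. One minor remark: the paper defines the spectral projector only under the standing assumption $\lambda(A)>0$, so your degenerate case is moot rather than needing to be handled separately.
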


\begin{lemma}\label{l:commproj}
Let $Q_i$, $i = 1,\ldots, r$, be commuting projectors. Then,
\begin{equation}
\Img(Q_1) \cap \cdots \cap \Img(Q_r) = \Img(Q_1\cdots Q_r) \enspace .
\end{equation}
\end{lemma}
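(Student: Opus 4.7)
The plan is to use the two defining properties of a projector ($Q^2=Q$ and $\Img(Q)=\{x : Qx=x\}$) together with the commutativity hypothesis, and to prove the two inclusions separately.

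For the inclusion $\Img(Q_1)\cap\cdots\cap\Img(Q_r)\subseteq\Img(Q_1\cdots Q_r)$, I would take $x$ in the intersection. The key observation is that for a projector $Q$, membership $x\in\Img(Q)$ is equivalent to the fixed-point identity $Qx=x$. Hence $Q_ix=x$ for every $i$, and I can peel off the projectors one by one from the right:
\[
Q_1Q_2\cdots Q_{r-1}Q_r x = Q_1Q_2\cdots Q_{r-1}x = \cdots = Q_1x = x,
\]
which exhibits $x$ as an element of $\Img(Q_1\cdots Q_r)$.

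For the reverse inclusion $\Img(Q_1\cdots Q_r)\subseteq\Img(Q_i)$ for each $i$, I would take $x=Q_1\cdots Q_r y$ and use commutativity to move $Q_i$ to the front of the product, writing
\[
x = Q_i\bigl(Q_1\cdots Q_{i-1}Q_{i+1}\cdots Q_r\bigr) y,
\]
which immediately gives $x\in\Img(Q_i)$. Since this holds for every $i$, $x$ lies in the intersection of the images.

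There is no real obstacle here: the argument relies only on the definition of a projector and on commutativity, and no max-algebraic structure is needed (the lemma is a purely algebraic fact, as the authors indicate when saying the proof is recalled for the reader's convenience). The only small care to take is to note the equivalence $x\in\Img(Q)\iff Qx=x$ at the outset, since it is used in both directions of the argument.
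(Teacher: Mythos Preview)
Your proof is correct and follows essentially the same approach as the paper: both directions use the characterization $x\in\Img(Q)\iff Qx=x$ together with commutativity. The only cosmetic difference is that for the reverse inclusion the paper multiplies $x=(Q_1\cdots Q_r)y$ by $Q_i$ and invokes idempotency to obtain $Q_ix=x$, whereas you simply commute $Q_i$ to the front of the product to exhibit $x\in\Img(Q_i)$ directly.
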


\begin{proof}
If $x \in \Img(Q_1) \cap \cdots \cap \Img(Q_r)$, then $Q_i x = x$ for
$i = 1,\ldots, r$, and hence $(Q_1\cdots Q_r) x = x$.
Thus, $x \in \Img(Q_1\cdots Q_r)$. Conversely,
if $x \in \Img(Q_1\cdots Q_r)$,
then $(Q_1 \cdots Q_r) y = x$ for some vector $y$.
Multiplying this equation by $Q_i$, for $i =1,\ldots, r$,
using the idempotency of $Q_i$ and commutativity, it follows that $Q_i x = x$.
This completes the proof of the lemma.
\end{proof}

Lemma~\ref{l:commproj} implies that we can express the intersection
of the principal eigencones of commuting matrices as follows:
\begin{equation}\label{qa1ar}
V(A_1,1)\cap\cdots\cap V(A_r,1)=
\Img(Q(A_1)) \cap \cdots \cap \Img(Q(A_r)) =
V(Q(A_1)\cdots Q(A_r), 1)\enspace .
\end{equation}
In the general (reducible) case,
this intersection may reduce to the zero vector.
Since by~\eqref{c:spineqs2} of Corollary~\ref{c:spineqs}
we have $\lambda(Q(A_1)\cdots Q(A_r))\leq 1$,
it follows that~\eqref{qa1ar} is not trivial if,
and only if, the Perron root of $Q(A_1)\cdots Q(A_r)$ is $1$,
in which case this intersection is given by
the principal eigencone of $Q(A_1)\cdots Q(A_r)$.
Using definition~\eqref{e:specproj-bac}, we can compute
this product in $O(rn^3)$ operations, and then it requires no more
than $O(n^3)$ operations to compute its Perron root and describe its
principal eigencone when the Perron root is $1$.

\section{Frobenius normal forms}\label{s:fnf}

Let $\assgraph=(N,E)$ be the associated digraph of $A\in\Rpnn$ and
$\assgraph^\mu =(N^{\mu},E^{\mu})$, for $\mu=1,\ldots ,t$, be the
connected components of $\assgraph$. We construct the {\em reduced digraph}
$\redgraph$ with set of nodes $\{ 1,\ldots ,t\}$ setting an edge
$(\mu,\nu)$ whenever there exist $i\in N^{\mu}$ and $j\in N^{\nu}$
such that $(i,j)\in E$. We shall call a connected component
(or the corresponding set of nodes) of $\assgraph$ a {\em class} of $A$
and also use that term for the
nodes of $\redgraph$. Further, we also identify subsets $S$ of
nodes of $\redgraph$ with the union of the corresponding classes of
$A$, that is $S$ may denote $\cup_{\nu \in S}N^\nu$.

Each class $\mu$ is labeled by the corresponding maximal cycle
(geometric) mean $\alpha_{\mu }$,
which will be also called the {\em Perron root} of the class. We
write $\mu\rightarrow\nu$ if $\mu = \nu$ or if there exists a path
in $\redgraph$ connecting $\mu$ to $\nu$ (in other words, if $\mu$
has access to $\nu$). A set $I$ of classes is an {\em initial segment}
of $\redgraph$ if $\nu \in I$ and $\mu\rightarrow\nu$ imply
that $\mu \in I$. The set of all classes $\mu$ such that
$\mu\rightarrow\nu$ will be denoted by $\intl(\nu)$ and called the
{\em initial segment} generated by $\nu$ in $\redgraph$. If $S$ is a
set of classes, then a class $\nu \in S$ is said to be {\em initial}
in $S$ if $\mu\rightarrow \nu$ and $\mu \in S$ imply that $\mu =
\nu$. Similarly,  a class $\nu \in S$ is called {\em final} in $S$
if $\nu\rightarrow \mu$ and $\mu \in S$ imply that $\mu = \nu$. An
initial (resp. final) class in $\{1,\ldots ,t\}$ is simply called
initial (resp. final).
A class $\nu$ is said to be {\em spectral} if $\nu$ is initial, or
if $\ga_\nu > 0$ and $\mu\rightarrow\nu$ imply that
$\alpha_{\mu}\leq \alpha_{\nu}$. A spectral class $\nu$ is called
{\em premier spectral} if $\mu\rightarrow\nu$ and $\mu\neq \nu$
imply that $\alpha_\mu < \alpha_\nu$.

Access relations for $\assgraph$ and $\redgraph$ are normally
visualized in terms of a {\em Frobenius form}. There exists a
similarity permutation of $A$ such that
\[
A =
\begin{pmatrix}
A _{1 1}& 0 &\cdots & 0& 0\\
A _{2 1}& A _{2 2} &\cdots &0 & 0\\
\vdots &\vdots &\ddots & \vdots  & \vdots \\
A _{(t-1)1}&A _{(t-1)2} &\cdots &A _{(t-1)(t-1)}& 0 \\
A _{t 1} &A _{t 2} &\cdots &A _{t(t-1)}& A _{t t}
\end{pmatrix}
\]
with irreducible diagonal blocks $A _{\mu \mu}$ for $\mu=1,\ldots ,t$.

A Frobenius (normal) form of $A$ arises from each total ordering of
the classes of $\redgraph$ that is anti-compatible with the partial
order given by the access relations, viz. $\mu \rightarrow \nu$
implies $\mu \geq \nu$. In particular, given any initial segment $I$
of $\redgraph$ there is a Frobenius form of $A$ for which the
classes of $I$ are $r$, $r+1,\ldots ,t$ for some $r \in\{1,\ldots ,t\}$.

We now state the {\em fundamental spectral theorem of max algebra}.
Recall that the {\em support} of a vector $x \in \Rpn$
consists of all $i \in N$ such that $x_i > 0$.

\begin{theorem}\label{t:fundspec}
Let $A \in \Rpnn$ and $\gl \in \Rp$.
Then, a subset $U$ of $N$ is the support of an
eigenvector associated with $\gl$ if, and only if,
\begin{enumerate}[(i)]
\item\label{t:fundspec1}
There is an initial segment $I$ of $\redgraph$ such that
$U = \cup_{\nu \in I}N^\nu$,
\item\label{t:fundspec2}
All final classes $\nu$ in $I$ are spectral and satisfy
$\ga_\nu =\gl$.
\end{enumerate}
\end{theorem}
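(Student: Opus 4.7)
The plan begins with the necessity direction for (i). Assume $Ax = \gl x$ with $x \neq 0$ and set $U = \supp(x)$. Reading the eigenvalue equation componentwise, $(Ax)_i = \max_k a_{ik} x_k = \gl x_i$, yields the key implication: if $x_i = 0$ and $(i,k) \in E$, then $x_k = 0$; contrapositively, $x_k > 0$ together with $(i,k) \in E$ forces $x_i > 0$. Iterating along directed walks of $\assgraph$ and using the strong connectivity inside each class, I conclude that each $N^\mu$ lies either entirely in $U$ or entirely outside $U$, and that $N^\nu \subseteq U$ combined with $\mu \to \nu$ forces $N^\mu \subseteq U$. Setting $I = \{\mu : N^\mu \subseteq U\}$ then delivers (i).

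For the necessity of (ii), fix a final class $\nu$ in $I$. For $i \in N^\nu$, any $k$ contributing to $(Ax)_i$ must satisfy $x_k > 0$ and $(i,k) \in E$, hence lies in a class $\mu \in I$ with $\nu \to \mu$; finality of $\nu$ in $I$ forces $\mu = \nu$, so the restriction $x|_{N^\nu}$ is a nonzero eigenvector of the irreducible block $A_{\nu\nu}$ with eigenvalue $\gl$. By the max Perron–Frobenius theorem for irreducible matrices, $A_{\nu\nu}$ has the unique eigenvalue $\ga_\nu$, so $\gl = \ga_\nu$. For spectrality when $\gl > 0$, take any $\mu \to \nu$; then $\mu \in I$ and $x_i > 0$ for all $i \in N^\mu$. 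Pick $i$ on an elementary cycle in $N^\mu$ realizing $\ga_\mu$, of length $k$; then $(A^k)_{ii} \geq \ga_\mu^k$ and iterating the eigenvalue equation gives $\gl^k x_i = (A^k x)_i \geq \ga_\mu^k x_i$, so $\ga_\mu \leq \gl = \ga_\nu$ and $\nu$ is spectral.

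For sufficiency, assume (i) and (ii), normalize $B = A/\gl$ (handling $\gl>0$), and observe that every $\mu \in I$ accesses some final class of $I$ (by iterating forward inside the finite set $I$); combined with spectrality of the final classes, this gives $\ga_\mu^B \leq 1$ for all $\mu \in I$, so the Kleene star of the submatrix $B_{II}$ indexed by $U$ converges. For each final $\nu \in I$, pick a positive eigenvector $w^{(\nu)}$ of $B_{\nu\nu}$ at eigenvalue $1$ and extend by zeros to $\tilde w^{(\nu)}$ on $U$. Set $x = \bigoplus_\nu (B_{II})^* \tilde w^{(\nu)}$ with the sum over final $\nu \in I$. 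Standard Kleene star identities together with the eigenvector property of $w^{(\nu)}$ on $N^\nu$ give $B_{II} x = x$, hence $A_{UU} x = \gl x$. The support of $x$ consists of all classes $\mu \in I$ that access some final class, which is all of $I$; extending $x$ by zeros outside $U$ preserves the eigenvalue equation because $I$ being an initial segment forbids edges from $N \setminus U$ into $U$.

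The main obstacle is the spectrality step in the necessity proof, which requires combining positivity of $x$ on $N^\mu$ with the cycle-mean characterization of $\ga_\mu$ through the iterated identity $A^k x = \gl^k x$. The degenerate case $\gl = 0$ needs separate comment: there $\ga_\nu = 0$ for every final $\nu \in I$, so spectrality collapses to $\nu$ being initial, and the sufficiency construction reduces to placing nonzero entries at suitable sink nodes. Beyond this, the proof relies on two external inputs from irreducible max Perron–Frobenius theory: uniqueness and positivity of the eigenvector of an irreducible block, and convergence of the Kleene star whenever the Perron root is at most $1$.
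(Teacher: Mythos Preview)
The paper does not supply a proof of Theorem~\ref{t:fundspec}; immediately after the statement it is quoted as a known result with references (``This theorem has a long history and has been stated in different ways, see e.g.~\cite{GP-97,Gau-92,CG:79,BCG-09,But-10,BCOQ}''). So there is no proof in the paper to compare your proposal against.

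Your argument follows the standard route and is essentially correct. One small gap deserves mention: in the degenerate case $\gl = 0$ you observe that spectrality of a final class $\nu$ in $I$ reduces to $\nu$ being initial in $\redgraph$, but you do not actually verify that $\nu$ \emph{is} initial. The missing step is immediate: from $Ax = 0$ one obtains $a_{ij}x_j = 0$ for all $i,j$, so no node of $U = \supp(x)$ has any incoming edge in $\assgraph$; hence every class contained in $U$ is a loopless singleton and therefore initial in $\redgraph$. The same observation shows that in the sufficiency direction with $\gl = 0$ the hypothesis forces $I$ to consist entirely of such trivial initial classes, and the indicator vector of $U$ is then the required eigenvector. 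With these routine completions both directions go through.
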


This theorem has a long history and has been stated in different ways,
see e.g.~\cite{GP-97,Gau-92,CG:79,BCG-09,But-10,BCOQ}.
The statement in Theorem~\ref{t:fundspec} is essentially the same
as the one that appeared in~\cite{GP-97}.

The following corollary is immediate.

\begin{corollary}\label{c:initnodes}
Let $A \in \Rpnn$. Then,
\begin{enumerate}[(i)]
\item\label{c:initnodes1}
$\gl$ is an eigenvalue if, and only if, there is a spectral class
$\nu$ such that $\ga_\nu = \gl$;
\item\label{c:initnodes2}
$\nu$ is a spectral class if, and only if, there exists an
eigenvector with support $\intl(\nu)$;
\item\label{c:initnodes3}
A spectral class $\nu$ is premier spectral if, and only if,
any eigenvector associated with $\ga_{\nu}$ whose support is contained
in $\intl(\nu)$ has its support equal to $\intl(\nu)$;
\item\label{c:initnodes4}
If the reduced digraph of $A$ has a unique spectral class $\nu$ with
Perron root $\ga_{\nu}$, then any eigenvector associated with
$\ga_{\nu}$ has support $\intl(\nu)$;
\item\label{c:initnodes5}
If the Perron roots of all classes are distinct, then all spectral
classes are premier spectral and all eigenvectors have support
$\intl(\nu$) for some spectral class $\nu$.
\end{enumerate}
\end{corollary}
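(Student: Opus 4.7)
The plan is to deduce all five parts of Corollary~\ref{c:initnodes} from Theorem~\ref{t:fundspec}, leveraging one recurring structural fact: for every class $\nu$ of $\redgraph$, the initial segment $\intl(\nu)$ has $\nu$ as its unique final class. This holds because the reduced digraph is acyclic, so any $\mu \in \intl(\nu) \setminus \{\nu\}$ satisfies $\mu \to \nu$ and cannot be final in $\intl(\nu)$.

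Parts~\eqref{c:initnodes1}, \eqref{c:initnodes2} and~\eqref{c:initnodes4} are then essentially repackagings of the theorem. For~\eqref{c:initnodes1}, one direction reads a spectral class with $\ga_\tau = \gl$ off any final class $\tau$ in the initial segment supporting an eigenvector; the other takes $I = \intl(\nu)$, whose unique final class $\nu$ is spectral with $\ga_\nu = \gl$. The forward direction of~\eqref{c:initnodes2} uses the same construction, while the backward direction observes that if an eigenvector has support $\intl(\nu)$, then $\nu$ is the unique candidate final class, which Theorem~\ref{t:fundspec} forces to be spectral. Part~\eqref{c:initnodes4} writes the support of an eigenvector for $\ga_\nu$ as $\cup_{\mu \in I} N^\mu$: every final class of $I$ must be spectral with Perron root $\ga_\nu$, so by the uniqueness hypothesis $\nu$ is the only final class of the initial segment $I$, which forces $I = \intl(\nu)$.

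The substantive step is~\eqref{c:initnodes3}. Forward: assuming $\nu$ is premier spectral and $v$ is an eigenvector for $\ga_\nu$ with $\supp(v) \subseteq \intl(\nu)$, Theorem~\ref{t:fundspec} writes $\supp(v) = \cup_{\tau \in I} N^\tau$ with $I$ an initial segment whose every final class $\tau$ is spectral and has $\ga_\tau = \ga_\nu$; since $\tau \to \nu$, the premier property forces $\tau = \nu$, hence $\nu \in I$ and $\intl(\nu) \subseteq I \subseteq \intl(\nu)$. Converse: if $\nu$ is spectral but not premier spectral, choose $\mu \to \nu$, $\mu \neq \nu$, with $\ga_\mu = \ga_\nu$; the key check is that $\mu$ inherits spectrality, since any $\tau \to \mu$ also satisfies $\tau \to \nu$ and so $\ga_\tau \leq \ga_\nu = \ga_\mu$ by spectrality of $\nu$. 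Part~\eqref{c:initnodes2} then furnishes an eigenvector for $\ga_\mu = \ga_\nu$ with support $\intl(\mu)$, and acyclicity of $\redgraph$ gives $\nu \notin \intl(\mu)$, so $\intl(\mu) \subsetneq \intl(\nu)$. Finally,~\eqref{c:initnodes5} is a direct consequence: distinct Perron roots make spectrality coincide with premier spectrality and ensure a unique spectral class for each prescribed Perron root, after which part~\eqref{c:initnodes4} pins down the support of every eigenvector. The main obstacle is the spectrality-inheritance argument in the converse of~\eqref{c:initnodes3}; everything else is bookkeeping around the unique-final-class observation.
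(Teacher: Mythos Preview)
Your proof is correct and follows the intended route: the paper itself offers no argument beyond declaring the corollary ``immediate'' from Theorem~\ref{t:fundspec}, and you have carefully unpacked that immediacy. In particular, your handling of the converse in~\eqref{c:initnodes3}---showing that a non-premier spectral class $\nu$ admits a strictly smaller spectral class $\mu$ with the same Perron root, via the access-inheritance $\tau\to\mu\Rightarrow\tau\to\nu$---is exactly the point that requires care, and you have it right.
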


The following well-known corollary also
follows easily from Theorem~\ref{t:fundspec}.

\begin{corollary}\label{c:posevec}
For any $A\in \Rpnn$ with $\gl(A)> 0$
the following statements are equivalent:
\begin{enumerate}[(i)]
\item\label{c:posevec1}
$A$ has a positive eigenvector.
\item\label{c:posevec2}
The Perron root of any final class is $\gl(A)$
(and so, in particular, all final classes are spectral).
\end{enumerate}
If either condition holds, then any positive eigenvector
is associated with the eigenvalue $\gl(A)$.
\end{corollary}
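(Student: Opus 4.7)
The plan is to deduce everything directly from Theorem~\ref{t:fundspec} by observing that a positive eigenvector is precisely one whose support equals $N$, and that the only initial segment $I$ of $\redgraph$ with $\cup_{\nu\in I}N^\nu=N$ is $I=\{1,\ldots,t\}$. Under this choice of $I$, the ``final classes in $I$'' of Theorem~\ref{t:fundspec} are just the final classes of the reduced digraph itself. With this translation, the equivalence of~\eqref{c:posevec1} and~\eqref{c:posevec2} should fall out almost immediately.

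For \eqref{c:posevec1}$\Rightarrow$\eqref{c:posevec2}: if $v$ is a positive eigenvector with eigenvalue $\mu$, Theorem~\ref{t:fundspec} forces $I=\{1,\ldots,t\}$, and condition~\eqref{t:fundspec2} of that theorem says every final class $\nu$ is spectral with $\ga_\nu=\mu$. For \eqref{c:posevec2}$\Rightarrow$\eqref{c:posevec1}: assuming every final class has Perron root $\gl(A)>0$, I need to invoke Theorem~\ref{t:fundspec} in the other direction with $I=\{1,\ldots,t\}$ and $\gl=\gl(A)$. The one non-automatic check is that these final classes qualify as spectral. This is the main (very small) obstacle: a final class $\nu$ need not be initial, so I must use the second alternative in the definition, which requires $\ga_\nu>0$ (given) and $\ga_\mu\le \ga_\nu$ for every $\mu\to\nu$. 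The latter holds because $\ga_\nu=\gl(A)$ is the global maximum of the cycle mean, hence dominates every $\ga_\mu$. Then Theorem~\ref{t:fundspec} delivers an eigenvector with full support, which is the desired positive eigenvector.

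Finally, for the last assertion, suppose a positive eigenvector $v$ exists with eigenvalue $\mu$; I want $\mu=\gl(A)$. The upper bound $\mu\le\gl(A)$ is by definition of the Perron root. For the lower bound, pick any class $\nu^*$ attaining $\ga_{\nu^*}=\gl(A)$; since $\redgraph$ is finite and acyclic as a quotient, following forward edges from $\nu^*$ eventually reaches a final class $\nu$, so $\nu^*\to\nu$. By the implication \eqref{c:posevec1}$\Rightarrow$\eqref{c:posevec2} proved above, $\nu$ is spectral with $\ga_\nu=\mu$, so the spectrality condition gives $\gl(A)=\ga_{\nu^*}\le\ga_\nu=\mu$. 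Combining the two inequalities yields $\mu=\gl(A)$, completing the proof. No computations beyond these bookkeeping steps are needed; the entire corollary is essentially a direct specialization of Theorem~\ref{t:fundspec} to the case $U=N$.
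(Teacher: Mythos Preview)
Your argument is correct and is exactly the direct specialization of Theorem~\ref{t:fundspec} that the paper has in mind (the paper does not spell out a proof, only remarks that the corollary ``follows easily from Theorem~\ref{t:fundspec}''). One small organizational point: your proof of \eqref{c:posevec1}$\Rightarrow$\eqref{c:posevec2} initially only yields $\ga_\nu=\mu$ for all final classes $\nu$, and it is your later argument for the last assertion that establishes $\mu=\gl(A)$ and thereby closes that implication; it would read more cleanly to prove $\mu=\gl(A)$ first (right after applying Theorem~\ref{t:fundspec}), so that both \eqref{c:posevec1}$\Rightarrow$\eqref{c:posevec2} and the final sentence follow at once.
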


The proof of our next lemma essentially repeats arguments used to
prove Corollary~\ref{c:commoneig} and Theorem~\ref{t:cv-rmats}.

\begin{lemma}\label{l:commutant}
Let $A \in \R_+^{n\times n}$ and $C\in\R_+^{m\times m}$. If $AX = XC$,
where $X\in\R_+^{n\times m}$ and every column of $X$ is nonzero,
then any eigenvalue of $C$ is also an eigenvalue of $A$.
\end{lemma}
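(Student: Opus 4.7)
The plan is to mimic exactly the argument used inside the proofs of Theorem~\ref{t:commoneig} and Theorem~\ref{t:cv-rmats}: produce an eigenvector of $A$ with the desired eigenvalue by pushing an eigenvector of $C$ forward through $X$.

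More precisely, I would start with an arbitrary eigenvalue $\mu$ of $C$. By the definition of eigenvalue (and as guaranteed in max algebra by Proposition~\ref{p:allpropert}(i)), there exists a nonzero $z\in\R_+^m$ with $Cz=\mu z$. Set $u:=Xz$. The hypothesis $AX=XC$ then gives, in one line,
\[
Au=AXz=XCz=X(\mu z)=\mu Xz=\mu u,
\]
so as soon as $u\neq 0$ we conclude that $\mu\in\Lambda(A)$, as required.

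The only point that requires a small argument is the nonvanishing of $u=Xz$. This is where the hypothesis that every column of $X$ is nonzero is used. Since $z\neq 0$, choose an index $j$ with $z_j>0$, and since the column $X_{\cdot j}$ is nonzero, choose an index $i$ with $X_{ij}>0$. Then
\[
u_i=\bigoplus_k X_{ik}z_k \geq X_{ij}z_j>0,
\]
so $u\neq 0$. (This is exactly the same nondegeneracy observation made when defining $u=X_\alpha^A z$ in the proof of Theorem~\ref{t:commoneig}, and when defining $u=Xz$ in the inductive step of Theorem~\ref{t:cv-rmats}.)

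There is no genuine obstacle here; the argument is essentially a one-liner given the earlier machinery. The only subtlety worth flagging is that the conclusion fails without the assumption that every column of $X$ is nonzero: a column of zeros in $X$ could force $u=Xz=0$ for an eigenvector $z$ of $C$ supported only on that column, and then $\mu$ would not be detected as an eigenvalue of $A$. The hypothesis on the columns of $X$ is precisely what rules this out.
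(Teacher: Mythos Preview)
Your proof is correct and follows exactly the same approach as the paper's: push an eigenvector $z$ of $C$ forward to $Xz$, use $AX=XC$ to get $A(Xz)=\lambda(Xz)$, and invoke the nonzero-columns hypothesis to ensure $Xz\neq 0$. The only difference is that you spell out the index argument for $Xz\neq 0$ in slightly more detail than the paper does.
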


\begin{proof} Suppose that $\lambda \in \Lambda(C)$ and let
$z\in\R_+^m $ be an eigenvector of $C$ associated with $\lambda $. Then,
$A X z = X C z = \lambda X z$.
Since every column of $X$ is nonzero,
we have $X z \neq 0$ and thus $\lambda \in \Lambda(A)$.
\end{proof}

If $A$ and $C$ are irreducible, then in
Lemma~\ref{l:commutant} it is enough to assume that $X$ is
nonzero because the vector $z$ in the proof above is positive.
Thus, we obtain:

\begin{lemma}\label{l:icommutant}
Let $A \in \R_+^{n\times n}$ and $C\in\R_+^{m\times m}$
be irreducible matrices. If $AX = XC$, where $X\in\R_+^{n\times m}$
is nonzero, then $\lambda(A) = \lambda(C)$.
\end{lemma}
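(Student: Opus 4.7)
The plan is to upgrade the proof of Lemma~\ref{l:commutant} by exploiting the stronger properties guaranteed by irreducibility, as summarized in the excerpt just before Theorem~\ref{t:fundspec}: an irreducible matrix has $\gl(\cdot)$ as its \emph{only} eigenvalue, and every eigenvector is \emph{strictly positive}. These two facts will remove the requirement in Lemma~\ref{l:commutant} that every column of $X$ be nonzero, and will force the two Perron roots to coincide.

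First, I would choose an eigenvector $z\in\R_+^m$ of $C$ associated with $\gl(C)$. Since $C$ is irreducible, $z$ has all coordinates strictly positive.

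Next, I would set $u=Xz$. Because $X$ is nonzero, there is some pair $(i,j)$ with $X_{ij}>0$, and since $z_j>0$ we get $(Xz)_i\geq X_{ij}z_j>0$, so $u\neq 0$. The hypothesis $AX=XC$ now gives
\[
Au=AXz=XCz=\gl(C)\,Xz=\gl(C)\,u,
\]
which shows $\gl(C)\in\Lambda(A)$. Finally, since $A$ is also irreducible, $\Lambda(A)=\{\gl(A)\}$, and therefore $\gl(C)=\gl(A)$.

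There is no serious obstacle: the only subtle point is the need to produce a nonzero $u=Xz$ from a possibly sparse $X$, and this is handled precisely by the positivity of $z$ granted by irreducibility of $C$. The role of irreducibility of $A$ is the complementary one of collapsing its spectrum to a single value, so that membership $\gl(C)\in\Lambda(A)$ immediately yields equality.
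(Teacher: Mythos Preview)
Your proof is correct and follows essentially the same approach as the paper: the paper also observes that, since $C$ is irreducible, the eigenvector $z$ in the proof of Lemma~\ref{l:commutant} is positive, which forces $Xz\neq 0$ whenever $X\neq 0$, and then irreducibility of $A$ collapses $\Lambda(A)$ to $\{\gl(A)\}$. You have simply spelled out these steps in slightly more detail.
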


The following important lemma indicates what happens if a matrix
commutes with an irreducible matrix.

\begin{lemma}\label{l:uniqueevalue}
If $A,B\in\R_+^{n\times n}$ commute and $B$ is irreducible, then
\begin{enumerate}[(i)]
 \item\label{l:uniqueevalue1}
The Perron root of every final class and every initial class of $A$
is $\gl(A)$ (and so, in particular, all final classes are spectral);
\item\label{l:uniqueevalue2}
$A$ has the unique eigenvalue $\gl(A)$;
\item\label{l:uniqueevalue3}
If $A$ is reducible,
then at least two distinct classes of $A$ have Perron root $\gl(A)$.
\end{enumerate}
\end{lemma}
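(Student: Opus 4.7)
The plan is to deduce all three parts from the fact that any eigenvector of the irreducible matrix $B$ is strictly positive, which forces every eigenvalue of $A$ to equal $\lambda(A)$. The bridge is Corollary~\ref{c:posevec}: positive eigenvectors of any nonnegative matrix are necessarily associated with its Perron root.

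For part~\eqref{l:uniqueevalue2}, I would apply Theorem~\ref{t:commoneig}: for every $\alpha \in \Lambda(A)$ there is a nonzero vector $u$ with $Au = \alpha u$ and $Bu = \mu u$ for some $\mu \in \Lambda(B)$. The irreducibility of $B$ implies $u > 0$ (as recalled in the paragraph following~\eqref{fund-eigv}), and then Corollary~\ref{c:posevec} gives $\alpha = \lambda(A)$. For part~\eqref{l:uniqueevalue1}, the same positive vector $u$ is in particular a positive eigenvector of $A$, so Corollary~\ref{c:posevec} immediately yields that every final class of $A$ has Perron root $\lambda(A)$, and is therefore spectral. Initial classes require slightly more work: an initial class $\nu$ is automatically spectral, so part~\eqref{c:initnodes2} of Corollary~\ref{c:initnodes} supplies an eigenvector of $A$ whose support equals $\intl(\nu) = \{\nu\}$, and Theorem~\ref{t:fundspec} identifies the associated eigenvalue as $\alpha_\nu$. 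Combining with~\eqref{l:uniqueevalue2} forces $\alpha_\nu = \lambda(A)$.

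Part~\eqref{l:uniqueevalue3} is then a purely combinatorial consequence of~\eqref{l:uniqueevalue1}: if $A$ is reducible, its reduced digraph $\redgraph$ has $t \geq 2$ classes, and by~\eqref{l:uniqueevalue1} it suffices to exhibit two distinct classes that are either initial or final. If $\redgraph$ has two distinct initial classes or two distinct final classes we are done, so assume the unique initial $\mu_0$ and the unique final $\mu_1$ coincide in a single class $\nu$. Then $\nu$ has no edges to or from the remaining classes of $\redgraph$; but any other class (which exists since $t \geq 2$), followed backward along the access relation in the finite DAG $\redgraph$, must terminate at some initial class necessarily distinct from $\nu$, contradicting the assumed uniqueness. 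The main obstacle in the whole argument is the initial-class half of~\eqref{l:uniqueevalue1}: Corollary~\ref{c:posevec} speaks only of final classes, so one has to invoke Theorem~\ref{t:fundspec} to pin down the eigenvalue of the support-$\{\nu\}$ eigenvector before~\eqref{l:uniqueevalue2} can be brought to bear.
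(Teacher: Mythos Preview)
Your argument is correct, but it is organized differently from the paper's and uses a different device for the initial-class half of~\eqref{l:uniqueevalue1}.

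The paper proves~\eqref{l:uniqueevalue1} first and then deduces~\eqref{l:uniqueevalue2} from it. For final classes it argues, as you do, via Corollary~\ref{c:commoneig} and Corollary~\ref{c:posevec}. For initial classes it uses a transpose trick: since $A^T$ commutes with the irreducible matrix $B^T$, the same argument shows that all final classes of $A^T$ have Perron root $\lambda(A^T)=\lambda(A)$, and final classes of $A^T$ are exactly the initial classes of $A$. Part~\eqref{l:uniqueevalue2} then follows from~\eqref{l:uniqueevalue1} via the definition of spectral class (any spectral class $\nu$ is accessed by some initial class, whose Perron root $\lambda(A)$ bounds $\alpha_\nu$ from below, while $\alpha_\nu\leq\lambda(A)$ always).

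You instead prove~\eqref{l:uniqueevalue2} first, invoking the full strength of Theorem~\ref{t:commoneig} (not just Corollary~\ref{c:commoneig}) to pair \emph{every} eigenvalue of $A$ with a positive common eigenvector, and then feed~\eqref{l:uniqueevalue2} back into the initial-class half of~\eqref{l:uniqueevalue1} through Corollary~\ref{c:initnodes}. This is perfectly valid; the transpose trick is simply a shorter way to handle initial classes without appealing to the spectral-class machinery. Your treatment of~\eqref{l:uniqueevalue3} spells out the same dichotomy the paper states in one line (``either two initial classes, or an initial class and a distinct final class'').
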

\begin{proof}
In the first place, note that the lemma is obvious if $\gl(A)=0$,
so we may assume that $\gl(A)> 0$.

\eqref{l:uniqueevalue1}
From Corollary~\ref{c:commoneig}, we know that $A$ and $B$
have a common eigenvector. Since $B$ is irreducible,
all its eigenvectors are positive.
It follows by~\eqref{c:posevec2} of
Corollary~\ref{c:posevec} that all final classes of $A$ have
Perron root $\gl(A)$ and are therefore spectral.
Similarly, the transpose $A^T$ commutes with the irreducible matrix $B^T$ and
therefore all final classes of $A^T$ have Perron root $\gl(A)$.
But the final classes of $A^T$ are precisely the initial classes of $A$.

\eqref{l:uniqueevalue2} This follows easily from~\eqref{l:uniqueevalue1},
Theorem~\ref{t:fundspec} and the definition of spectral class.

\eqref{l:uniqueevalue3} If $A$ is reducible,
either it has two initial classes or
an initial class and a distinct final class.
\end{proof}

\begin{remark}\label{r:spineqs}
{\rm In Corollary~\ref{c:spineqs}, the irreducibility assumption can
be relaxed. We need there that just {\em one} of the matrices is
irreducible, for then by~\eqref{l:uniqueevalue2} of Lemma~\ref{l:uniqueevalue}
each matrix has a unique eigenvalue.}
\end{remark}

The {\em transitive closure} of $\redgraph$ is the digraph
$\redgraph^*$ which has the edge $(\mu,\nu)$ if, and only if,
$\mu\rightarrow\nu$ in $\redgraph$. We shall say that $\nu$
{\em covers} $\mu$ in $\redgraph^*$ if $ \nu \neq \mu$,
$\nu \rightarrow \mu$ and the following property is satisfied:
$\nu \rightarrow \delta \rightarrow \mu$
implies that either $\delta=\mu$ or $\delta = \nu$.

The main result of this section is the following theorem.

\begin{theorem}\label{t:distroots}
Suppose that $A_1,\ldots ,A_r\in\R_+^{n\times n}$ pairwise commute
and that all classes of $A_i$, for each $i\in \{1,\ldots,r\}$, have
distinct Perron roots. Then,
\begin{enumerate}[(i)]
\item\label{t:d1}
All classes of $A_1,\ldots,A_r$ and $A_1+\cdots+ A_r$ coincide;
\item\label{t:d2}
The transitive closures of the reduced digraphs of
$A_1,\ldots,A_r$ and $A_1+\cdots+ A_r$ coincide;
\item\label{t:d3}
The spectral classes of the reduced digraphs of $A_1,\ldots,A_r$ and
$A_1+\cdots+ A_r$ coincide. In particular, $A_1,\ldots,A_r$ have the
same number of distinct eigenvalues;
\item\label{t:d4}
Let $\mu_1,\ldots,\mu_m$  be the common spectral classes of
$A_1,\ldots,A_r$ and denote the Perron root of the $\mu_j$-th class
of $A_i$ by $\ga^j_i$. Then, for any max polynomial
$p(x_1,\ldots,x_r)$, the eigenvalues of $p(A_1,\ldots,A_r)$ are
precisely $p(\alpha^j_1,\ldots,\alpha^j_r)$ for $j=1,\ldots,m$
(possibly with repetitions).
\end{enumerate}
\end{theorem}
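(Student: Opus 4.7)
The plan is to prove the four parts in order, each resting on a block analysis of the commutation relation in a fixed Frobenius normal form of $A_1$ with irreducible diagonal blocks $(A_1)_{\mu\mu}$ of pairwise distinct Perron roots. For part~\eqref{t:d1}, I first show that every $A_i$ is block lower triangular in $A_1$'s Frobenius form. Proceeding by reverse induction on the super-diagonal distance $\nu-\mu>0$ (base case $\mu=1,\nu=t$), each cross term in the $(\mu,\nu)$-block equation $(A_1A_i)_{\mu\nu}=(A_iA_1)_{\mu\nu}$ vanishes: on the left, terms with $\sigma<\mu$ involve $(A_i)_{\sigma\nu}$ of strictly larger super-diagonal distance $\nu-\sigma$, hence are zero by induction; on the right, terms with $\sigma>\nu$ involve $(A_i)_{\mu\sigma}$ of distance $\sigma-\mu$, again zero. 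The equation reduces to
\[
(A_1)_{\mu\mu}\,(A_i)_{\mu\nu}\;=\;(A_i)_{\mu\nu}\,(A_1)_{\nu\nu},
\]
and Lemma~\ref{l:icommutant} forces $(A_i)_{\mu\nu}=0$ since the Perron roots of $(A_1)_{\mu\mu}$ and $(A_1)_{\nu\nu}$ differ. With $A_i$ block triangular, $(A_i)_{\mu\mu}$ commutes with the irreducible $(A_1)_{\mu\mu}$; if $(A_i)_{\mu\mu}$ were reducible,~\eqref{l:uniqueevalue3} of Lemma~\ref{l:uniqueevalue} would produce two classes of $A_i$ sharing a Perron root, contradicting the distinctness hypothesis. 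Hence the classes of $A_i$ coincide with those of $A_1$; the same argument gives that $A_+:=A_1+\cdots+A_r$ is block triangular in this form, and each $(A_+)_{\mu\mu}$, being a componentwise maximum of irreducible blocks on a common strongly connected node set, is itself irreducible.

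For part~\eqref{t:d2}, a parallel but \emph{forward} induction on the sub-diagonal distance $\mu-\nu>0$ shows that every direct edge of $\redgraph(A_i)$ already lies in the transitive closure of $\redgraph(A_1)$. Assume $(A_i)_{\mu\nu}\neq 0$ while $\mu$ has no $A_1$-access to $\nu$. For any intermediate class $\sigma\in(\nu,\mu)$, either $(A_1)_{\mu\sigma}=0$ or $(A_i)_{\sigma\nu}=0$ (the latter by the inductive hypothesis at the strictly smaller distance $\sigma-\nu<\mu-\nu$), for otherwise we could splice an $A_1$-path $\mu\to\sigma\to\nu$; symmetrically the right-hand cross terms $(A_i)_{\mu\sigma}(A_1)_{\sigma\nu}$ vanish. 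The $(\mu,\nu)$-block equation again reduces to the Sylvester form above, yielding $(A_i)_{\mu\nu}=0$ by Lemma~\ref{l:icommutant} and a contradiction. By symmetry the transitive closures of $\redgraph(A_1)$ and $\redgraph(A_i)$ agree, and since $\redgraph(A_+)=\bigcup_i\redgraph(A_i)$ its transitive closure equals this common one.

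For part~\eqref{t:d3}, parts~\eqref{t:d1}--\eqref{t:d2} give a common set of classes with a common access relation, so the map $\nu\mapsto\intl(\nu)$ is injective. If $\nu$ is spectral in $A_1$, Theorem~\ref{t:cv-rmats} yields a common eigenvector $v$ with support $\intl(\nu)$; by~\eqref{c:initnodes5} of Corollary~\ref{c:initnodes} applied to each $A_i$, this support equals $\intl(\nu')$ for some spectral class $\nu'$ of $A_i$, and injectivity forces $\nu'=\nu$. Thus the spectral classes of the $A_i$ coincide, and $v$ is also an eigenvector of $A_+$ with the same support, so $\nu$ is spectral in $\redgraph(A_+)$ by~\eqref{c:initnodes2} of Corollary~\ref{c:initnodes}. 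For the converse, let $\mu$ be spectral in $\redgraph(A_+)$ and pick $j$ with $\lambda((A_j)_{\mu\mu})=\lambda((A_+)_{\mu\mu})$ (such $j$ exists since $\lambda((A_+)_{\mu\mu})=\max_i\lambda((A_i)_{\mu\mu})$); then for every $\tau\to\mu$,
\[
\lambda((A_j)_{\tau\tau})\;\leq\;\lambda((A_+)_{\tau\tau})\;\leq\;\lambda((A_+)_{\mu\mu})\;=\;\lambda((A_j)_{\mu\mu}),
\]
so $\mu$ is spectral in $A_j$ and hence in every $A_i$. Part~\eqref{t:d4} then follows at once: each common spectral class $\mu_j$ yields via Theorem~\ref{t:cv-rmats} a common eigenvector $v_j$ with $A_iv_j=\alpha^j_i v_j$, producing the eigenvalue $p(\alpha^j_1,\ldots,\alpha^j_r)$ of $p(A_1,\ldots,A_r)$; conversely, by~\eqref{t:p2} of Theorem~\ref{t:polynomials} any $\lambda\in\Lambda(p(A_1,\ldots,A_r))$ equals $p(\alpha_1,\ldots,\alpha_r)$ with $\alpha_i\in\Lambda(A_i)$ realized by a common eigenvector whose support is $\intl(\mu_j)$ for some common spectral class $\mu_j$, forcing $\alpha_i=\alpha^j_i$.

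The main obstacle I foresee is the converse direction in part~\eqref{t:d3}: because $A_+$ need not have distinct class Perron roots,~\eqref{c:initnodes5} of Corollary~\ref{c:initnodes} is unavailable for $A_+$ and the eigenvector-support argument does not apply symmetrically. One must instead pick an index $j$ realizing the maximum class Perron root and transfer spectrality back through the chain of inequalities above; this is how commutation, combined with distinct class Perron roots for each $A_i$, indirectly aligns the orderings of class Perron roots across the different $A_i$.
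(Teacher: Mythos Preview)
Your proof is correct. The main departure from the paper is in parts~\eqref{t:d1} and~\eqref{t:d2}.

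For~\eqref{t:d1}, the paper works in the Frobenius form of $C:=A_1+\cdots+A_r$ rather than of $A_1$. Since $\assgraph(A_i)\subseteq\assgraph(C)$ entrywise, every $A_i$ is automatically block lower triangular in that partition and its Frobenius form refines that of $C$; then Lemma~\ref{l:uniqueevalue}\eqref{l:uniqueevalue3} alone (applied to $(A_i)_{\mu\mu}$ commuting with the irreducible $C_{\mu\mu}$) forces $(A_i)_{\mu\mu}$ irreducible. This bypasses your reverse induction on the super-diagonal distance.

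For~\eqref{t:d2}, the paper restricts attention to \emph{cover} pairs $\nu\to\mu$ in $\redgraph^*(C)$: for such pairs the cross terms in the $(\nu,\mu)$-block of $A_iC=CA_i$ vanish for structural reasons (any intermediate $\sigma$ with $\nu\to\sigma\to\mu$ is forbidden), and one lands on the same Sylvester equation $B_{\nu\nu}C_{\nu\mu}=C_{\nu\mu}B_{\mu\mu}$ you obtain, forcing $(A_i)_{\nu\mu}\neq 0$ via Lemma~\ref{l:icommutant}. Since a transitive digraph is determined by its cover relations, this suffices.

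Your route derives the Sylvester equation by inductively killing cross terms rather than specializing to cover pairs, and anchors everything in a fixed $A_1$ instead of $C$. The paper's approach is shorter because choosing the Frobenius form of $C$ gives block triangularity of all $A_i$ for free; your approach has the mild advantage that the induction/Sylvester mechanism is uniform across~\eqref{t:d1} and~\eqref{t:d2} and never needs to introduce $C$ until the end. Parts~\eqref{t:d3} and~\eqref{t:d4}, including your handling of the converse for $A_+$ by selecting the index $j$ realizing the maximal class Perron root, coincide with the paper's argument.
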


\begin{proof}
\eqref{t:d1} Suppose that $C:= A_1+\cdots+ A_r$ is in Frobenius
form and partition $A_i$, for $i=1,\ldots,r$, correspondingly.
Evidently, a Frobenius  form of $B:=A_i$, for $i=1,\ldots,r$,
is a refinement of the Frobenius form of $C$. Since $B_{\mu \mu}$
and $C_{\mu \mu}$ commute and $C_{\mu \mu}$ is irreducible,
by~\eqref{l:uniqueevalue3} of Lemma~\ref{l:uniqueevalue} and our assumption,
it follows that $B_{\mu \mu}$ is also irreducible. Therefore,
$B=A_i$ is also in Frobenius form. This proves~\eqref{t:d1}.

\eqref{t:d2} Now suppose that $\nu$ covers $\mu$ in
the reduced digraph associated with $C$.
Then, for $B:=A_i$ the matrices
\[
\begin{pmatrix}
B_{\mu \mu} &  0 \\
B_{\nu \mu} &  B_{\nu \nu}
\end{pmatrix}
\enspace {\rm and }\enspace
 \begin{pmatrix}
C_{\mu \mu} &  0 \\
C_{\nu \mu} &  C_{\nu \nu}
\end{pmatrix}
\]
commute and, by assumption, $C_{\nu \mu} \neq 0$. Suppose that
$B_{\nu \mu} = 0$. Examining the $(2,1)$ block of the products of
these matrices we obtain
\begin{equation}
B_{\nu \nu}C_{\nu \mu} = C_{\nu \mu} B_{\mu \mu} \enspace .
\end{equation}
Since $B_{\mu \mu}$ and $B_{\nu \nu}$ are irreducible, it follows
from Lemma~\ref{l:icommutant} that the Perron roots of $B_{\mu \mu}$
and $B_{\nu \nu}$ are equal. This contradicts our assumption and
hence $B_{\nu \mu} \neq 0$. But two transitive digraphs coincide if
the cover relations are identical. This proves~\eqref{t:d2}.

\eqref{t:d3}
In the first place, observe that any initial segment $\intl(\nu)$
generated by a class $\nu$ in the reduced digraph associated with one
of the matrices $A_1,\ldots,A_r$ or $A_1+\cdots+ A_r$ is independent of the
choice of the matrix
because the transitive closures of their reduced digraphs coincide.
For this reason, in what follows we shall denote by $\intl(\nu)$ this
common initial segment and we shall not specify the matrix it corresponds to.

Let $\mu_j$ be a spectral class of $A_i$. Since all
classes of $A_i$ have distinct Perron roots,
from~\eqref{c:initnodes5} of Corollary~\ref{c:initnodes} it follows
that every spectral class is premier spectral and that every
eigenvector of $A_i$ associated with $\ga^j_i$ has support
$\intl(\mu_j)$. But, by Theorem~\ref{t:cv-rmats}, there are
eigenvalues of $A_k$ for $k\neq i$ that share an eigenvector
with the eigenvalue $\ga^j_i$ of $A_i$. Since this eigenvector has
support $\intl(\mu_j)$, by~\eqref{c:initnodes2} of
Corollary~\ref{c:initnodes} it follows that $\mu_j$
is a spectral class for all $A_k$.

Note that the above argument shows that any spectral class of $A_i$ is also
a spectral class of $A_1+\cdots+ A_r$. To prove the converse in max
algebra, suppose that $\mu$ is a spectral class of $A_1+\cdots + A_r$.
Using the additivity of Perron roots
(see Corollary~\ref{c:spineqs}), we obtain
\begin{equation}\label{e:maxinequ}
\oplus_{i=1}^r \lambda((A_i)_{\nu \nu}) =
\lambda((\oplus_{i=1}^r A_i)_{\nu \nu}) \leq
\lambda((\oplus_{i=1}^r A_i)_{\mu \mu}) =
\oplus_{i=1}^r \lambda((A_i)_{\mu \mu}) \enspace ,
\end{equation}
for all $\nu \in \intl(\mu)$. Without loss of generality,
assume that
$\oplus_{i=1}^r \lambda((A_i)_{\mu \mu})= \lambda((A_1)_{\mu \mu})$.
Then, from~\eqref{e:maxinequ} it follows that
$\lambda((A_1)_{\nu \nu})\leq \lambda((A_1)_{\mu \mu})$
for all $\nu \in \intl(\mu)$,
implying that $\mu$ is a spectral class of $A_1$,
and hence of all $A_i$.

\eqref{t:d4} By Theorem~\ref{t:cv-rmats},
for each common spectral class $\mu_j$ of $A_1,\ldots ,A_r$
there exists a common eigenvector $v^j$ which has support $\intl(\mu_j)$.
Since $A_i v^j =\alpha_i^j v^j$ for $i=1,\ldots , r$,
it follows that
$p(A_1,\ldots ,A_r)v^j = p(\alpha_1^j,\ldots,\alpha_r^j) v^j$ and thus
$p(\alpha_1^j,\ldots,\alpha_r^j)$ is an eigenvalue of
$p(A_1,\ldots,A_r)$. Let now $\lambda $ be an eigenvalue of
$p(A_1,\ldots,A_r)$. As $p(A_1,\ldots,A_r)$ commutes with $A_i$ for
all $i=1,\ldots,r$, by Theorem~\ref{t:cv-rmats} there exists an
eigenvector $v$ of $p(A_1,\ldots,A_r)$ associated with $\lambda $
which is also an eigenvector of $A_i$ for all $i$. Then,
by~\eqref{c:initnodes5} of Corollary~\ref{c:initnodes} there exists
a common spectral class $\mu_j$ of $A_1,\ldots ,A_r$ such that the
support of $v$ is equal to $\intl(\mu_j)$. Therefore, we have
$A_i v=\alpha_i^j v$ for all $i=1,\ldots,r$, implying that
$\lambda=p(\alpha_1^j,\ldots,\alpha_r^j)$ because
$\lambda v=p(A_1,\ldots,A_r)v=p(\alpha_1^j,\ldots,\alpha_r^j)v$.
\end{proof}

As it was already observed, under the assumptions of
Theorem~\ref{t:distroots}, the eigenvalues $\alpha^j_i$,
$i = 1,\ldots, r$, of the matrices $A_1,\ldots ,A_r$ are associated with
some common spectral class $\mu_j$ of their reduced digraphs.
We next show how to compute the intersection
of the corresponding eigencones.
Let $I$ be the initial segment generated by the spectral class
$\mu_j$ in any of the reduced digraphs associated with the matrices
$A_i$ (recall that this initial segment is independent
of the choice of the matrix because the transitive closures of
their reduced digraphs coincide).
We write uniquely each vector $x \in \R^n_+$ as $x[I] + x[I']$,
where $I'$ is the complement of $I$ in $\{1,\ldots ,n\}$.
Since $I$ is an initial segment of
the reduced digraphs associated with all the matrices $A_i$, there
is a Frobenius form of all these matrices such that $I = \{r,r+1,\ldots,t\}$
for some $r\in\{1,\ldots,t\}$.
If we denote the submatrix of $A_i$
based on the set of classes $I$ by $A_i[I,I]$,
then as the matrices $A_1,\ldots ,A_r$ commute in pairs,
it follows that also the matrices $A_1[I,I],\ldots ,A_r[I,I]$
commute in pairs. Therefore,
we can apply the method described in Section~\ref{s:intersection}
to compute the intersection of their principal eigencones.
Moreover, by Corollary~\ref{c:initnodes} we know that
$x\in V(A_1,\alpha_1^j) \cap \cdots \cap V(A_r,\alpha_r^j)$ if,
and only if, $x[I']=0$ and
$x[I]\in V(A_1[I,I],\alpha_1^j) \cap \cdots \cap V(A_r[I,I],\alpha_r^j)$,
where the latter is the intersection of the principal eigencones
of $A_i[I,I]$, because by the definition of these matrices
we have $\lambda(A_i[I,I])=\alpha_i^j$ for all $i = 1,\ldots, r$.

\section{Common scaling and application of Boolean algebra}\label{s:bmaxalg}

\subsection{Common scaling and saturation digraphs}

The whole of this section is in max algebra only. It is inspired by
the works of Cuninghame-Green and Butkovi\v{c}~\cite{CGB-08,But-10},
where commuting matrices are studied in the context of two-sided
systems and generalized eigenproblem. In these works, commuting
irreducible matrices are {\em assumed} to have a common eigennode.
We are going to show that it is always the case.

If $A=(a_{ij})$ and $B=(b_{ij})$ are irreducible and $AB=BA$,
then they have a common positive eigenvector $u$,
and using $U=\diag(u)$ they can be simultaneously scaled to
$\Tilde{A}:=U^{-1}AU$ and $\Tilde{B}:=U^{-1}BU$.
Assumed that $\lambda(A)=\lambda(B)=1$,
we obtain for $\Tilde{A}=(\Tilde{a}_{ij})$ and
$\Tilde{B}=(\Tilde{b}_{ij})$ that
\begin{equation}
\begin{split}
Au=u \Rightarrow & \forall i\exists j\colon
a_{ij}u_j=u_i\Leftrightarrow
\Tilde{a}_{ij}=1 \enspace ,\\
& \forall i,j\colon a_{ij}u_j\leq u_i \Leftrightarrow
\Tilde{a}_{ij}\leq 1 \enspace .\\
Bu=u \Rightarrow & \forall i\exists j\colon
b_{ij}u_j=u_i\Leftrightarrow
\Tilde{b}_{ij}=1 \enspace ,\\
& \forall i,j\colon b_{ij}u_j\leq u_i \Leftrightarrow
\Tilde{b}_{ij}\leq 1 \enspace .
\end{split}
\end{equation}
Defining $\Tilde{A}^{[1]}=(\Tilde{a}^{[1]}_{ij})$ and
$\Tilde{B}^{[1]}=(\Tilde{b}^{[1]}_{ij})$ by:
\begin{equation}
\Tilde{a}^{[1]}_{ij}=
\begin{cases}
1 & \text{if }\Tilde{a}_{ij}=1 \; ,\\
0 & \text{otherwise.}
\end{cases}
\enspace  \enspace
\Tilde{b}^{[1]}_{ij}=
\begin{cases}
1 & \text{if }\Tilde{b}_{ij}=1 \; ,\\
0 & \text{otherwise.}
\end{cases}
\end{equation}
we obtain that
\begin{equation}
\label{atilde-btilde} \forall i\exists j\colon
\Tilde{a}_{ij}^{[1]}=1 \enspace ,\quad \forall i\exists k\colon
\Tilde{b}_{ik}^{[1]}=1 \enspace .
\end{equation}
This means that both $\Tilde{A}^{[1]}$ and $\Tilde{B}^{[1]}$ are
incidence matrices of digraphs $\assgraph_1=(N,E_1)$ and
$\assgraph_2=(N,E_2)$, where each node has a nonzero number of
outgoing edges. These digraphs are the {\em saturation digraphs} of
$u$ with respect to $A$ and $B$, meaning that $(i,j)\in E_1$
(resp. $(i,j)\in E_2$) if, and only if, $a_{ij}u_j=u_i$
(resp. $b_{ij}u_j=u_i$). We recall the following well-known result.

\begin{proposition}[Baccelli et al.~\cite{BCOQ}]\label{sat-digraph}
Let $A\in\R_+^{n\times n}$ be irreducible and let $u\in\R_+^n$ be an eigenvector
of $A$. Then, the strongly connected components of the saturation
digraph of $u$ with respect to $A$ are the same as that of the critical digraph
$\crit(A)$.
\end{proposition}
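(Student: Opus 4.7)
The plan is to reduce the statement to the observation that, after scaling, the critical cycles of $A$ are exactly the cycles of the saturation digraph of $u$. Since $A$ is irreducible, by Corollary~\ref{c:posevec} any eigenvector $u$ is positive and associated with $\lambda(A)$, so with $U = \diag(u)$ we may form the scaled matrix $\Tilde{A} = \lambda(A)^{-1} U^{-1} A U$. Scaling and division by $\lambda(A)$ do not alter the saturation digraph, the critical digraph, or the cycle geometric means (they only rescale them), so without loss of generality I would assume $\lambda(A)=1$, $\Tilde a_{ij}\leq 1$ for all $i,j$, and $\Tilde a_{ij}=1$ precisely for the edges $(i,j)$ of the saturation digraph $\assgraph_1$.

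First I would show that every cycle of $\assgraph_1$ is a critical cycle. Indeed, if $(i_1,i_2,\ldots,i_k,i_1)$ is a cycle in $\assgraph_1$, then each $\Tilde a_{i_\ell i_{\ell+1}} = 1$, so the cycle geometric mean equals $1 = \lambda(A)$, which means the cycle is critical and all its nodes and edges lie in $\crit(A)$.

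Next I would prove the converse: every critical cycle of $A$ lies in $\assgraph_1$. If $(i_1,\ldots,i_k,i_1)$ is critical, then by~\eqref{mcm} we have $\Tilde a_{i_1 i_2}\cdots \Tilde a_{i_k i_1} = 1$; but each factor is at most $1$, so each factor equals $1$, which means every edge of the cycle is saturated and hence belongs to $E_1$.

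Combining the two steps, a node of $N$ lies on a cycle of $\assgraph_1$ if, and only if, it lies on a critical cycle, i.e.\ belongs to $N_c^A$; and between such nodes the edges of $\assgraph_1$ and of $\crit(A)$ coincide. Consequently the nontrivial strongly connected components of $\assgraph_1$ are exactly the strongly connected components of $\crit(A)$, which is the assertion. The only mildly delicate point (and the one I would flag as the main obstacle) is the interpretation of ``same strongly connected components'': because every node of $\assgraph_1$ has out-degree at least $1$ by~\eqref{atilde-btilde}, non-critical nodes still appear in $\assgraph_1$, but they occur only as trivial (singleton, no loop) components, so they do not affect the comparison with $\crit(A)$.
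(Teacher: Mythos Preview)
The paper does not supply its own proof of this proposition; it is quoted from Baccelli et al.\ as a known result (``We recall the following well-known result''). Your argument is correct and is the standard one: after the diagonal similarity scaling by the positive eigenvector and normalizing $\lambda(A)$ to $1$, the saturated edges are precisely those with entry $1$, and since all entries are at most $1$, a cycle has geometric mean $1$ if and only if every edge on it is saturated. Your final remark about the interpretation is also apt: the saturation digraph lives on the full node set $N$ while $\crit(A)$ lives on $N_c^A$, so the identification is between the nontrivial strongly connected components of the former and the (automatically nontrivial) components of the latter.
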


This proposition tells us that the strongly connected components of
$\assgraph_1$ and $\assgraph_2$ are those of $\crit(A)$ and
$\crit(B)$.

\subsection{Commuting Boolean matrices}

Now we study in more detail the case of Boolean matrices, to show that two 
irreducible commuting matrices in max algebra always have a common eigennode.
In a similar way, the graphs of commuting Boolean matrices are studied 
in~\cite[Proposition 10]{DO-10a} to make an observation about the general case.

We need a couple of simple facts, which combined with 
Proposition~\ref{sat-digraph}, will provide the 
connection between max algebra and the Boolean case.

\begin{lemma}\label{l:011}
If the matrices $A,B\in\R_+^{n\times n}$ are such that
$a_{ij}\leq 1$ and $b_{ij}\leq 1$ for all $i,j\in N$,
then $(AB)^{[1]}=A^{[1]}B^{[1]}$.
\end{lemma}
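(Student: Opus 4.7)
The plan is to verify the identity entrywise, exploiting the fact that in the max-times semiring, an entry of a product equals $1$ precisely when at least one summand equals $1$, and a product of two scalars bounded by $1$ equals $1$ precisely when both factors equal $1$.

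First I would fix indices $i,j$ and compute
\[
(AB)_{ij}=\bigoplus_{k=1}^n a_{ik}b_{kj}=\max_{k} a_{ik}b_{kj}.
\]
Since $a_{ik},b_{kj}\in[0,1]$ by hypothesis, each product satisfies $a_{ik}b_{kj}\leq 1$, and so $(AB)_{ij}\leq 1$. Therefore $(AB)^{[1]}_{ij}=1$ if and only if there exists an index $k$ with $a_{ik}b_{kj}=1$. Again using $a_{ik},b_{kj}\leq 1$, the equality $a_{ik}b_{kj}=1$ forces both $a_{ik}=1$ and $b_{kj}=1$, and conversely these two equalities give $a_{ik}b_{kj}=1$.

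Next I would unpack the right-hand side in the same way. By definition of $(\cdot)^{[1]}$, $(A^{[1]}B^{[1]})_{ij}=\max_{k} a^{[1]}_{ik}b^{[1]}_{kj}$, and since all entries of $A^{[1]}$ and $B^{[1]}$ lie in $\{0,1\}$, this maximum equals $1$ if and only if there exists $k$ with $a^{[1]}_{ik}=b^{[1]}_{kj}=1$, i.e. with $a_{ik}=1$ and $b_{kj}=1$. Comparing with the characterization of $(AB)^{[1]}_{ij}=1$ obtained in the previous step gives the equality of the $(i,j)$ entries. Since $i,j$ were arbitrary, the lemma follows.

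I do not anticipate any real obstacle: the whole argument is an entry-by-entry check, and the only subtlety is to invoke the hypothesis $a_{ij},b_{ij}\leq 1$ at the right moment so that ``the max-times product of two numbers equals $1$'' becomes equivalent to ``both factors equal $1$''. Without this hypothesis the conclusion can fail, e.g.\ $a_{ik}=2$ and $b_{kj}=1/2$ give $a_{ik}b_{kj}=1$ while neither factor is $1$, so the role of the bound $\leq 1$ is essential and should be emphasized in the write-up.
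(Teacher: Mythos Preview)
Your argument is correct and matches the paper's own proof essentially step for step: both verify the identity entrywise by observing that $(AB)_{ij}=1$ iff some $a_{ik}b_{kj}=1$, which under the hypothesis $a_{ik},b_{kj}\leq 1$ is equivalent to $a_{ik}=b_{kj}=1$, and this is exactly the condition for $(A^{[1]}B^{[1]})_{ij}=1$.
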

\begin{proof}
For all $i,k\in N$, we may have two cases:
\begin{equation}\label{comm-choices}
\bigoplus_{j=1}^n a_{ij}b_{jk}=1 \enspace  \enspace \makebox{or } \enspace
\bigoplus_{j=1}^n a_{ij}b_{jk}<1 \enspace .
\end{equation}
In the first case of~\eqref{comm-choices}, there exists $h$
such that $a_{ih}b_{hk}=1$, which implies
$a_{ih}=b_{hk}=1$, since $a_{ij}\leq 1$ and
$b_{ij}\leq 1$ for all $i$ and $j$.
Passing to $A^{[1]}$ and $B^{[1]}$, we have
$a^{[1]}_{ih}=b^{[1]}_{hk}=1$ and thus 
$a^{[1]}_{ih}b^{[1]}_{hk}=1$. Using this we obtain
\begin{equation}
\label{comm-1mats1}
\bigoplus_{j=1}^n a^{[1]}_{ij}b^{[1]}_{jk}=1 \enspace .
\end{equation}
In the second case of~\eqref{comm-choices},
there are no such $h$ as above, and we obtain
\begin{equation}
\label{comm-1mats2} \bigoplus_{j=1}^n a^{[1]}_{ij}b^{[1]}_{jk}=0 \enspace .
\end{equation}
It follows that $(AB)^{[1]}=A^{[1]}B^{[1]}$.
\end{proof}

We immediately deduce the following observation.

\begin{lemma}\label{a1b1-comm}
If the matrices $A,B\in\R_+^{n\times n}$ are such that $AB=BA$ and
$a_{ij}\leq 1$, $b_{ij}\leq 1$ for all $i,j\in N$,
then $A^{[1]}B^{[1]}=B^{[1]}A^{[1]}$.
\end{lemma}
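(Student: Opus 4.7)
The plan is to exploit Lemma~\ref{l:011} directly: both $A$ and $B$ satisfy the hypothesis that their entries lie in $[0,1]$, so Lemma~\ref{l:011} applies to the ordered pair $(A,B)$ as well as to the ordered pair $(B,A)$. The first application yields $(AB)^{[1]} = A^{[1]}B^{[1]}$, and the second yields $(BA)^{[1]} = B^{[1]}A^{[1]}$.

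The commutativity hypothesis $AB = BA$ then forces the left-hand sides of these two identities to agree, since the operator $(\,\cdot\,)^{[1]}$ is a well-defined function of a matrix. Chaining the three equalities gives $A^{[1]}B^{[1]} = (AB)^{[1]} = (BA)^{[1]} = B^{[1]}A^{[1]}$, which is exactly the claim.

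There is effectively no obstacle: the hard work has already been done in Lemma~\ref{l:011}, where the case analysis on whether $\bigoplus_j a_{ij}b_{jk}$ equals $1$ or is strictly less than $1$ establishes that the $(\,\cdot\,)^{[1]}$ operator is multiplicative on matrices with entries bounded by $1$. Once that multiplicativity is in hand, commutativity of $A^{[1]}$ and $B^{[1]}$ is a purely formal consequence of the commutativity of $A$ and $B$. The proof is therefore a one-line deduction and requires no further lemmas or case analysis.
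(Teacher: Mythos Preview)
Your proof is correct and is exactly the argument the paper has in mind: the paper states Lemma~\ref{a1b1-comm} immediately after Lemma~\ref{l:011} with the phrase ``We immediately deduce the following observation,'' leaving the reader to fill in precisely the chain $A^{[1]}B^{[1]} = (AB)^{[1]} = (BA)^{[1]} = B^{[1]}A^{[1]}$ that you have written out.
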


This motivates us to study the Boolean case in more detail.

\begin{theorem}\label{t:commdigr}
Let $\assgraph_1$ and $\assgraph_2$ be two commuting digraphs
(meaning that their incidence matrices commute)
with nonzero out-degree at each node,
and let $\assgraph_1^{\mu}=(N_1^{\mu},E_1^{\mu})$ for $\mu=1,\ldots,m_1$ and
$\assgraph_2^{\nu}=(N_2^{\nu},E_2^{\nu})$ for $\nu=1,\ldots,m_2$ be the
strongly connected components of $\assgraph_1$ and $\assgraph_2$ respectively. 
Then, there exists a cycle $c_1\in \assgraph_1$ such
that all nodes on this cycle belong to $\bigcup_{\nu =1}^{m_2}
N^{\nu}_2$, and a cycle $c_2\in \assgraph_2$ such that all nodes on
this cycle belong to $\bigcup_{\mu =1}^{m_1} N^{\mu}_1$.
\end{theorem}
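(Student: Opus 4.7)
The strategy is to isolate a key lemma: every node $u \in R_2 := \bigcup_{\nu=1}^{m_2} N_2^{\nu}$ has at least one $\assgraph_1$-successor that also lies in $R_2$. Granting this, I pick $u_0 \in R_2$ --- such a node exists because $\assgraph_2$ is a finite digraph with nonzero out-degree at every vertex and therefore contains at least one cycle --- and repeatedly apply the lemma to produce a sequence $u_0, u_1, u_2, \ldots$ in $R_2$ with $A_{u_k u_{k+1}} = 1$. Finiteness of $R_2$ forces $u_p = u_q$ for some $p < q$, and then $u_p \to u_{p+1} \to \cdots \to u_{q-1} \to u_p$ is a $\assgraph_1$-cycle all of whose nodes lie in $R_2$. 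This provides $c_1$; swapping the roles of $A$ and $B$ in the same argument yields $c_2$.

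To prove the key lemma, I fix $u \in R_2$ and let $c$ be the length of some $\assgraph_2$-cycle through $u$, so that $(B^c)_{uu} = 1$. Denote $N_A(u) = \{j : A_{uj} = 1\}$, which is nonempty by the out-degree hypothesis on $\assgraph_1$. For any $j \in N_A(u)$,
\[
(B^c A)_{uj} \ge (B^c)_{uu}\, A_{uj} = 1,
\]
and commutativity $AB^c = B^c A$ then forces $(AB^c)_{uj} = 1$, producing an $l \in N_A(u)$ with $(B^c)_{lj} = 1$. Consequently, the binary relation on $N_A(u)$ defined by ``$l \to j$ iff $(B^c)_{lj} = 1$'' has in-degree at least one at every vertex. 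In a finite digraph this forces a cycle: starting from any $j_0 \in N_A(u)$ and walking backward along predecessors must revisit a vertex by pigeonhole, yielding a cycle $j_0 \to j_1 \to \cdots \to j_{k-1} \to j_0$ in this relation. Concatenating the associated $\assgraph_2$-paths of length $c$ produces a closed $\assgraph_2$-walk of length $kc$ through $j_0$, and any closed walk forces its endpoint to lie on some simple $\assgraph_2$-cycle, so $j_0 \in R_2$. Since $j_0 \in N_A(u)$, we also have $A_{u j_0} = 1$, which is exactly the conclusion of the lemma.

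The main obstacle is recognizing that commutativity applied to $B^c A$, combined with the single identity $(B^c)_{uu} = 1$, precisely encodes the predecessor structure on $N_A(u)$ needed to locate a fresh element of $R_2$ one $\assgraph_1$-step away from $u$. Once the lemma is framed in this way, the cycle extraction inside $N_A(u)$ and the iterative construction of $c_1$ (and of $c_2$ by symmetry) are routine finiteness arguments.
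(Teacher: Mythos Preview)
Your argument is correct and rests on the same device as the paper's: at a node lying on a cycle of one digraph, commute the corresponding matrix power through a single step of the other incidence matrix to force every out-neighbour to have a predecessor (with respect to that power) inside the out-neighbourhood, and then extract a cycle by pigeonhole. The paper runs this with the roles of $A$ and $B$ interchanged (it constructs $c_2$ first) and organises the iteration by hopping between leaves of the induced subgraphs $\assgraph_2[N_1^{\mu}]$, whereas your formulation as the lemma ``every $u\in R_2$ has a $\assgraph_1$-successor in $R_2$'' is a cleaner packaging of the same step and avoids that extra bookkeeping. One small slip: walking backward from an arbitrary $j_0$ along predecessors yields a cycle through the first \emph{repeated} vertex of the backward walk, which need not be $j_0$ itself; take that repeated vertex as the base point of the cycle and the rest of the argument goes through unchanged.
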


\begin{proof}
Pick $\mu_1\in\{1,\ldots,m_1\}$ and consider the digraph
$\assgraph_2[N^{\mu_1}_1]$ induced by $N^{\mu_1}_1$. 
Then, either this induced digraph has a cycle and the claim is true, 
or it is acyclic. In the latter case, let $i\in N^{\mu_1}_1$ be a leaf in
$\assgraph_2[N_1^{\mu_1}]$, i.e. a node with no edges back into $N_1^{\mu_1}$.
Let $M=\{j\mid (i,j)\in E_2\}$.
As $i$ is a leaf in $\assgraph_2[N_1^{\mu_1}]$,
we have $M\cap N_1^{\mu_1}=\emptyset$.
There is a cycle $c\in\assgraph_1$ which goes through $i$.
Select $j\in M$ and consider the path $c\circ(i,j)$
(first turn around along $c$ in $\assgraph_1$ then move $i\to j$ in
$\assgraph_2$).
As the digraphs commute, there is a path $P=(i,k)\circ P'$ 
connecting node $i$ with node $j$ such that $(i,k)\in E_2$ 
and the path $P'\in\assgraph_1$ is of the same length as $c$.
Hence, for each node $j\in M$ there exists a node $k\in M$
such that $k$ has access to $j$ in $\assgraph_1$.
This implies that some nodes in $M$ lie on a cycle in $\assgraph_1$
so that $M$ intersects a component
$\assgraph_1^{\mu_2}$ of $\assgraph_1$.

Consider the digraph $\assgraph_2[N_1^{\mu_2}]$.
If it is not acyclic then the claim is true,
otherwise we take $j\in M\cap N_1^{\mu_2}$ and
proceed to a leaf $k$ accessed by $j$ in $\assgraph_2[N_1^{\mu_2}]$.
We have obtained a path from $i$ to
$k$ in $\assgraph_2$, whose nodes lie in $\bigcup_{\mu=1}^{m_1}
N^{\mu}_1$. Arguing as above we can continue this path until we
obtain a cycle $c_2$ in $\assgraph_2$ which has all its nodes in
$\bigcup_{\mu=1}^{m_1} N^{\mu}_1$. The cycle $c_1$ in $\assgraph_1$
which has all its nodes in $\bigcup_{\nu=1}^{m_2} N^{\nu}_2$ is
obtained analogously. The claim is proved.
\end{proof}

Theorem~\ref{t:commdigr} implies notable facts about
the critical digraphs of two irreducible commuting matrices in max algebra.

\begin{theorem}\label{t:commoneigennode}
If two irreducible matrices $A,B\in\R_+^{n\times n}$ commute, then the
conclusion of Theorem~\ref{t:commdigr} holds for the strongly connected components
of $\crit(A)$ and $\crit(B)$. In particular, $A$ and $B$ have a common eigennode.
\end{theorem}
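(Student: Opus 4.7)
My plan is to reduce the statement to the Boolean situation of Theorem~\ref{t:commdigr} via the common scaling described at the start of Section~\ref{s:bmaxalg}. First I would invoke Corollary~\ref{c:commoneig} to obtain a common eigenvector $u$ of $A$ and $B$. Since $A$ and $B$ are irreducible, Theorem~\ref{t:fundspec} forces $u$ to be strictly positive and the associated eigenvalues must coincide with the unique Perron roots $\lambda(A)$ and $\lambda(B)$. The critical digraph of a matrix is invariant under multiplication by a positive scalar (all cycle means rescale uniformly), so I may assume without loss of generality that $\lambda(A) = \lambda(B) = 1$.

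Next, I would set $U = \diag(u)$ and pass to $\tilde A = U^{-1} A U$, $\tilde B = U^{-1} B U$. This similarity preserves commutativity and the critical digraph of each matrix; the entries of $\tilde A$ and $\tilde B$ are all bounded by $1$; and each of the saturation digraphs $\assgraph_1$, $\assgraph_2$ encoded by $\tilde A^{[1]}$ and $\tilde B^{[1]}$ has nonzero out-degree at every node, as in~\eqref{atilde-btilde}. Lemma~\ref{a1b1-comm} then gives that $\tilde A^{[1]}$ and $\tilde B^{[1]}$ commute, which is precisely the hypothesis of Theorem~\ref{t:commdigr} for $\assgraph_1$ and $\assgraph_2$.

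Applying Theorem~\ref{t:commdigr} to $\assgraph_1$ and $\assgraph_2$ yields a cycle $c_1 \in \assgraph_1$ all of whose nodes lie in the union of the strongly connected components of $\assgraph_2$, and symmetrically a cycle $c_2 \in \assgraph_2$. By Proposition~\ref{sat-digraph}, the strongly connected components of $\assgraph_1$ coincide with those of $\crit(A)$, and the strongly connected components of $\assgraph_2$ coincide with those of $\crit(B)$. This gives the conclusion of Theorem~\ref{t:commdigr} phrased in terms of the SCCs of $\crit(A)$ and $\crit(B)$. For the ``in particular'' clause: the cycle $c_1$ lies inside a single SCC of $\assgraph_1$, hence its nodes belong to $N_c^A$; by the theorem they also belong to $N_c^B$; therefore any node of $c_1$ is a common eigennode of $A$ and $B$.

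The work is essentially bookkeeping: the one place that requires care is verifying that the scaling does deliver commuting Boolean matrices satisfying the nonzero-out-degree hypothesis of Theorem~\ref{t:commdigr}, and that Proposition~\ref{sat-digraph} correctly transports the output of Theorem~\ref{t:commdigr} from the saturation digraphs back to the critical digraphs. All the serious combinatorial content has already been placed in Theorem~\ref{t:commdigr} and Proposition~\ref{sat-digraph}, so no further graph-theoretic obstacle is expected here.
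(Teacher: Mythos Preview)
Your proposal is correct and follows essentially the same approach as the paper's proof: common eigenvector via Corollary~\ref{c:commoneig}, simultaneous scaling to bring entries below $1$, passage to the Boolean saturation matrices via Lemma~\ref{a1b1-comm}, application of Theorem~\ref{t:commdigr}, and transport back via Proposition~\ref{sat-digraph}. Your derivation of the ``in particular'' clause (noting that $c_1$, being a cycle of $\assgraph_1$, sits in a single SCC of $\assgraph_1$ and hence in $N_c^A$, while Theorem~\ref{t:commdigr} places its nodes in $N_c^B$) is slightly more explicit than the paper's, but the argument is the same.
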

\begin{proof}
If $A,B\in\R_+^{n\times n}$ commute, then they have a common eigenvector $u$ by
Corollary~\ref{c:commoneig}. If these matrices are irreducible, then $u$
is positive and $U:=\diag(u)$ can be used to make a simultaneous
diagonal similarity scaling: $\Tilde{A}:=U^{-1}AU$ and $\Tilde{B}:=U^{-1}BU$.
Evidently, $\Tilde{A}\Tilde{B}=\Tilde{B}\Tilde{A}$. Also, 
we have $\crit(\Tilde{A})=\crit(A)$ and $\crit(\Tilde{B})=\crit(B)$. 
Notice that $\Tilde{A}^{[1]}$, resp. $\Tilde{B}^{[1]}$, 
is the incidence matrix of the saturation digraph of $u$ 
with respect to $A$, resp. to $B$. These saturation digraphs will
be denoted by $\assgraph_1$ and $\assgraph_2$, respectively 
(with the intention to use Theorem~\ref{t:commdigr}). By Lemma~\ref{a1b1-comm},
we have $\Tilde{A}^{[1]}\Tilde{B}^{[1]}=\Tilde{B}^{[1]}\Tilde{A}^{[1]}$. 
As $\assgraph_1$ and $\assgraph_2$ are saturation digraphs, 
each node in these digraphs has a nonzero out-degree. 
Applying Theorem~\ref{t:commdigr}, we obtain that its conclusion
holds for the strongly connected components of
$\assgraph_1$ and $\assgraph_2$. 
By Proposition~\ref{sat-digraph}, these components
are precisely the strongly connected components of $\crit(A)$ and $\crit(B)$.
Now the conclusion of Theorem~\ref{t:commdigr} also implies that $A$ and $B$
have a common eigennode.
\end{proof}

Let us consider a special case, 
which usually appears if $A$ and $B$ are chosen at random.

\begin{corollary}
Let two irreducible matrices $A,B\in\R_+^{n\times n}$ commute. If $\crit(A)=(N^A_c,E^A_c)$
and $\crit(B)=(N^B_c,E^B_c)$ both consist of just one cycle, then
$N_c^A=N_c^B$.
\end{corollary}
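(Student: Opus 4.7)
The plan is to deduce this corollary directly from Theorem~\ref{t:commoneigennode} by exploiting how special the hypothesis is. Since $A,B$ are irreducible and commute, Theorem~\ref{t:commoneigennode} guarantees that the conclusion of Theorem~\ref{t:commdigr} holds with $\assgraph_1,\assgraph_2$ replaced by $\crit(A),\crit(B)$. Specifically, there is a cycle $c_1 \in \crit(A)$ whose nodes all lie in $N_c^B$, and a cycle $c_2 \in \crit(B)$ whose nodes all lie in $N_c^A$.

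Next I would observe that the hypothesis ``$\crit(A)$ consists of just one cycle'' forces every cycle of $\crit(A)$ to traverse exactly the node set $N_c^A$, and analogously for $B$. Indeed, if $\crit(A)$ is a single cycle (with no additional edges), the only cycles it contains are cyclic traversals of that one cycle, so any cycle in $\crit(A)$ visits precisely the nodes in $N_c^A$. Hence the nodes of $c_1$ equal $N_c^A$, and the nodes of $c_2$ equal $N_c^B$.

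Combining, $N_c^A \subseteq N_c^B$ (because $c_1$'s nodes are $N_c^A$ and lie in $N_c^B$) and $N_c^B \subseteq N_c^A$ (symmetrically), so $N_c^A = N_c^B$. There is no real obstacle here; the work was already done in Theorem~\ref{t:commoneigennode}, and this corollary simply records the simplest specialization, where having exactly one critical cycle in each matrix collapses the containment of some cycle into the equality of the full critical node sets.
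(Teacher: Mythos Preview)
Your proof is correct and is precisely the intended argument: the paper states the corollary without proof immediately after Theorem~\ref{t:commoneigennode}, signaling that it is the direct specialization you wrote out. The only minor point worth making explicit is that the cycle $c_1$ produced by Theorem~\ref{t:commoneigennode} is, a priori, a cycle in the saturation digraph rather than in $\crit(A)$ itself; but any cycle in the saturation digraph is a critical cycle (its geometric mean equals $\lambda(A)$), so indeed $c_1\in\crit(A)$ and your identification of its node set with $N_c^A$ goes through.
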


\section{Examples of commuting matrices in max algebra}\label{s:examples}

In this section we give several examples in max algebra, which will
appear now as the semiring $(\R \cup \{ -\infty \}, \max ,+)$, i.e.
the set $\R \cup \{ -\infty \}$ equipped with $\max$ as ``addition''
and the usual sum as ``multiplication''.
This semiring is isomorphic to $(\Rp, \max ,\times )$
via the logarithmic transform.

Consider the irreducible commuting matrices
\[
A_1=\begin{pmatrix}
-2 & 1 & -\infty \\
-1 & -1 & -2 \\
-1 & -\infty & -2
\end{pmatrix}
\enspace {\rm and }\enspace A_2= \begin{pmatrix}
0 & -1 & -1 \\
-\infty & 0 & -4 \\
-3 & -\infty & 0
\end{pmatrix} \enspace .
\]
Then, it is straightforward to check that $\lambda (A_1)= \lambda(A_2)= 0$,
$N_c^{A_1}=\{ 1,2 \} $ and $N_c^{A_2}=\{ 1,2,3 \} $.
Therefore, as claimed in Theorem~\ref{t:commoneigennode}, $A_1$
and $A_2$ have a common eigennode.

In order to compute their common eigenvectors,
we apply the method described in Section~\ref{s:intersection}.
Since
\[
Q(A_1)=\begin{pmatrix}
0 & 1 & -1 \\
-1 & 0 & -2 \\
-1 & 0 & -2
\end{pmatrix}
\enspace {\rm and }\enspace Q(A_2)= \begin{pmatrix}
0 & -1 & -1 \\
-7 & 0 & -4  \\
-3 & -4 & 0
\end{pmatrix} \enspace ,
\]
it follows that
\[
Q(A_1)Q(A_2)=\begin{pmatrix}
0 & 1 & -1 \\
-1 & 0 & -2 \\
-1 & 0 & -2
\end{pmatrix} \enspace .
\]
Then, by~\eqref{qa1ar} we have
\[
V(A_1,0)\cap V(A_2,0)=V(Q(A_1)Q(A_2),0)=\left\{\lambda (1,0,0)^T
\mid \lambda \in \R \cup \{ -\infty \} \right\} \enspace .
\]

The following example of commuting matrices illustrates
Lemma~\ref{l:uniqueevalue}. Let
\[
A=
\begin{pmatrix}
1 & -\infty & -\infty \\
1 & 0 & -\infty \\
0 & 1 & 1  \\
\end{pmatrix}
\enspace \makebox{ and }\enspace
B=
\begin{pmatrix}
0 & 0 & 0 \\
0 & 0 & 0 \\
0 & 0 & 0  \\
\end{pmatrix} \enspace .
\]
Then, $A$ and $B$ commute, $B$ is irreducible and $A$ satisfies the
conditions of Lemma~\ref{l:uniqueevalue}.

As an example of reducible commuting matrices, consider
\[
A_1=
\begin{pmatrix}
0 & -\infty & -\infty &-\infty \\
1 & 3 & -\infty & -\infty \\
2 & -\infty & -1 & -\infty \\
-\infty & -\infty & 0 & 2
\end{pmatrix}
\enspace \makebox{ and }\enspace
A_2=
\begin{pmatrix}
6 & -\infty & -\infty & -\infty \\
5 & 7 & -\infty & -\infty \\
8 & -\infty & 5 & -\infty \\
5 & -\infty & 6 & 8
\end{pmatrix} \enspace .
\]
The classes of these matrices are their diagonal elements.
Since the Perron roots of the classes (i.e. the diagonal entries
in this case) of each of these matrices are distinct,
we know by Theorem~\ref{t:distroots} that the
transitive closure of the reduced digraph associated with these
matrices are the same, even if these digraphs are different, as can
be easily checked. By the same theorem, we know that the spectral
classes of the associated reduced digraph coincide. In this case, for
both matrices the spectral classes are $2$ and $4$. Each of these
matrices has two different eigenvalues corresponding to their
spectral classes. The eigenvalues of $A_1$ are $3$ and $2$ and the
ones of $A_2$ are $7$ and $8$.

\section{Classical nonnegative matrices}\label{s:classical}

In this section we assume knowledge of some  basic results on
nonnegative matrices found in e.g.~\cite{BP-94} or~\cite{Gan-59}. We
state analogs for nonnegative matrices in classical matrix algebra
of the results for matrices in max algebra proved in
Section~\ref{s:commoneig} (except for the last subsection) and in
Section~\ref{s:fnf}. The results and proofs are essentially
identical. We first need to redefine some symbols and terms.

Let $A \in \Rpnn$. Following standard terminology, we call an
eigenvalue $\mcm$ of $A$ a {\em distinguished eigenvalue} of $A$ if
$\mcm \geq 0$ and there is a nonnegative eigenvector corresponding
to it. In this section, $\Lambda(A)$ will be the set of
distinguished eigenvalues of $A$ and $V(A,\lambda)$ the convex cone
of nonnegative eigenvectors (and the $0$ vector) associated with a
distinguished eigenvalue $\lambda$. By the Perron-Frobenius theorem,
$\Lambda(A)$ is nonempty and the largest element in $\Lambda(A)$ is
called the {\em Perron root} of $A$. Moreover, any eigencone
$V(A,\lambda)$ is finitely generated, and the intersection of
finitely generated convex cones is again finitely generated.
Matrices leaving a cone invariant in $\Rpn$ (indeed in $\R^n$) have
been much studied, see e.g.~\cite{TS-07}. Proposition~\ref{p:common}
is well known in this context.

Lemma~\ref{l:InvEigenCone}, Theorem~\ref{t:commoneig},
Theorem~\ref{t:cv-rmats}, Corollary~\ref{c:commoneig} and their
proofs go through without further change to the classical
nonnegative case, except that we need to insert the adjective
``nonnegative'' in Corollary~\ref{c:commoneig}:

\begin{mycorollary:commoneigP}
If $A,B\in\R_+^{n\times n}$ commute, then they have a common
nonnegative eigenvector.
\end{mycorollary:commoneigP}

It follows that if $A$ and $B$ are commuting nonnegative matrices and
one of them is irreducible, then they have a common Perron vector.

Theorem~\ref{t:polynomials} and Corollary \ref{c:spineqs} are also
valid in the classical nonnegative case under the following
assumptions: the matrices $A_1,\ldots,A_r \in \Rpnn$ commute in
pairs and $p(x_1,\ldots,x_r)$ is a real polynomial such that
$p(A_1,\ldots,A_r)$ is nonnegative and, in the case of
Corollary~\ref{c:spineqs}, all the coefficients of $p(x_1,\ldots,x_r)$ are
nonnegative. In the latter, by the analog of Remark~\ref{r:spineqs},
we need to assume only one of the $A_i$ is irreducible.

Turning to Section~\ref{s:fnf}, we again construct the reduced
digraph of $A\in \Rpnn$ and we now label each class $\mu$ with its
classical Perron root $\ga_\mu$.  By a theorem of
Frobenius~\cite{Fro-12}, we replace Theorem~\ref{t:fundspec} by:

\begin{mytheorem:spectrnodesP}
Let $A \in \Rpnn$ and $\gl \in \Rp$.
Then, a subset $U$ of $N$ is the support of a nonnegative
eigenvector associated with $\gl$ if, and only if,
\begin{enumerate}[(i)]
\item\label{t:fundspec1P}
There is an initial segment $I$ such that $U = \cup_{\nu \in I}N^\nu$,
\item\label{t:fundspec2P}
All final classes $\nu$ in $I$ are premier spectral and satisfy
$\ga_\nu =\gl$.
\end{enumerate}
\end{mytheorem:spectrnodesP}

See e.g.~\cite{Sch-86}. We observe that the supports of nonnegative
eigenvectors of $A \in \Rpnn$  are completely determined in 
Theorem~\ref{t:fundspec}A by (i) the classes (i.e. the strongly connected
components) of $\assgraph$, (ii) the Perron roots of these classes
and (iii) the access relations of $\redgraph$ (equivalently the edges
of $\redgraph^*$). A similar remark holds for 
Theorem~\ref{t:fundspec} and other results in 
Sections~\ref{s:fnf} and~\ref{s:classical}.

We restate Corollary~\ref{c:initnodes} as:

\begin{mycorollary:initnodesP}
Let $A \in \Rpnn$. Then,
\begin{enumerate}[(i)]
\item\label{c:initnodes1P}
$\gl$ is a distinguished eigenvalue if, and only if, there is a
premier spectral class $\nu$ such that $\ga_\nu = \gl$;
\item\label{c:initnodes2P}
$\nu$ is a premier spectral class if, and only if, there exists a
nonnegative eigenvector with support $\intl(\nu)$;
\item\label{c:initnodes3P}
If $\nu$ is a premier spectral class, then any nonnegative
eigenvector associated with $\ga_{\nu}$ whose support is contained
in $\intl(\nu)$ has its support equal to $\intl(\nu)$;
\item\label{c:initnodes4P}
If the reduced digraph of $A$ has a unique premier spectral class
$\nu$ with Perron root $\ga_{\nu}$, then any nonnegative eigenvector
associated with $\ga_{\nu}$ has support $\intl(\nu)$;
\item\label{c:initnodes5P}
If the Perron roots of all classes are distinct, then all
nonnegative eigenvectors have support $\intl(\nu$) for some premier
spectral class $\nu$.
\end{enumerate}
\end{mycorollary:initnodesP}

The analog of Corollary~\ref{c:posevec} in nonnegative linear
algebra is well known, but we need to replace~\eqref{c:posevec2}
of Corollary~\ref{c:posevec} by:
``The Perron root of any final class is $\gl(A)$ and all final
classes are premier spectral''. Lemma~\ref{l:commutant}
goes through without change except that we need to replace
``eigenvalue'' with ``distinguished eigenvalue'' and
Lemma~\ref{l:icommutant} also holds in nonnegative linear algebra.

%
%
%

In the classical nonnegative case we obtain the following known
stronger form of Lemma~\ref{l:uniqueevalue} which may be found
on~\cite[p.53]{BP-94}.  We give a short proof along the lines of the
proof of Lemma~\ref{l:uniqueevalue}.

\begin{mylemma:uniqueevalueP}\label{l:uniqueevaluePrima}
If $A,B\in\R_+^{n\times n}$ commute and $B$ is irreducible,
then the Perron root of $A$ is its unique distinguished eigenvalue.
Moreover, if $A$ is reducible, it is completely reducible
(viz, the direct sum of irreducible matrices after a permutation similarity).
\end{mylemma:uniqueevalueP}

\begin{proof}
We repeat the proof of~\eqref{l:uniqueevalue1} of
Lemma~\ref{l:uniqueevalue} to show that both $A$ and $A^T$ have
positive eigenvectors. This implies that all initial and final classes
in the reduced digraph of $A$ are premier spectral with Perron root $\gl(A)$.
But a premier spectral class cannot have access to another premier spectral
class with the same Perron root. It follows that all initial classes are
final and vice versa. This means that a class has access only to itself,
which proves the lemma.
\end{proof}

Theorem~\ref{t:distroots} also holds in nonnegative algebra, with
exception of the last part of~\eqref{t:d3} whose proof is specific
to max algebra. Thus we obtain the following main theorem of this
section.

\begin{mytheorem:distrootsP}
Suppose that $A_1,\ldots ,A_r\in\R_+^{n\times n}$ pairwise commute
and that all classes of $A_i$, for each $i\in\{1,\ldots,r\}$, have
distinct Perron roots. Then,
\begin{enumerate}[(i)]
\item\label{t:d1pr}
All classes of $A_1,\ldots,A_r$ and $A_1+\cdots+ A_r$ coincide;
\item\label{t:d2pr}
The transitive closures of the reduced digraphs of
$A_1,\ldots,A_r$ and $A_1+\cdots+ A_r$ coincide;
\item\label{t:d3pr}
The reduced digraphs of $A_1,\ldots,A_r$ have the same premier
spectral classes, which are premier spectral classes of
$A_1+\cdots+ A_r$. In particular,
$A_1,\ldots,A_r$ have the same number of distinct distinguished eigenvalues;
\item\label{t:d4pr}
Let $\mu_1,\ldots,\mu_m$  be the common premier spectral classes of
$A_1,\ldots,A_r$ and denote the Perron root of the $\mu_j$-th class
of $A_i$ by $\ga^j_i$. Then, for any real polynomial
$p(x_1,\ldots,x_r)$ such that $p(A_1,\ldots,A_r)$ is nonnegative,
the  distinguished eigenvalues of $p(A_1,\ldots,A_r)$ are precisely
$p(\alpha^j_1,\ldots,\alpha^j_r)$ for $j=1,\ldots,m$ (possibly with
repetitions).
\end{enumerate}
\end{mytheorem:distrootsP}

We end this section with an example to illustrate Theorem~\ref{t:distroots}A.
\begin{example} {\rm
Let
\[
A =
\begin{pmatrix}
    10  &   0 &    0\\
     5  &   0 &    0\\
     2  &   3 &    3
\end{pmatrix} \enspace \makebox{ and } \enspace
B =
\begin{pmatrix}
     3 &    0 &  0\\
     1 &    1 &  0\\
     0 &    1 &   2
\end{pmatrix} \enspace .
\]
Then, $AB = BA$. The classes of $A$ and $B$ are their diagonal
elements, and the skeleton of their reduced digraphs (meaning the
diagram of cover relations) is
\[
1 \leftarrow 2 \leftarrow 3\; .
\]
The premier spectral classes of both matrices are $1$ and $3$ and
the distinguished eigenvalues are the corresponding entries. Their
common (nonnegative) eigenvectors are $(2, 1, 1)^T$ and
$(0, 0, 1)^T$, respectively.

Of course, $A^T$ and $B^T$ also commute. Note that the skeleton of
their reduced digraphs is obtained by reversing the arrows in the
diagram above. The only spectral class of $A^T$ or $B^T$ is 1 and
their common eigenvector is $(1, 0, 0)^T$.

We also observe that $p(A,B) = A^2B - AB \geq 0$ satisfies the
conditions of~\eqref{t:d4pr} of Theorem~\ref{t:distroots}A.
}
\end{example}

{\em Acknowledgement.}\/ We thank M. Drazin, T. Hawkins and R. Horn
for comments which have helped to improve this paper.
P. Butkovi\v{c} and B.~S. Tam deserve particular thanks for their
careful reading of our manuscript and many suggestions.


\end{document}